\def\sinh{\operatorname{sinh}}
\def\vv{\mathbf v}
\begin{document}
%%-----------------------------
%%      the top matter
%%-----------------------------
\title{Two mixed finite element formulations for the weak imposition of the Neumann boundary conditions for the Darcy flow}%\thanks{...}\thanks{...}% At most 5 thanks
\author{Erik Burman}\address{Chair of Computational Mathematics, University College London, London, UK. E-mail address: e.burman@ucl.ac.uk}
\author{Riccardo Puppi}\address{Chair of Modelling and Numerical Simulation, \'Ecole Polytechnique F\'ed\'erale de Lausanne, Lausanne, CH. E-mail address: riccardo.puppi@epfl.ch}
%\author{...}\address{...}
\date{April 2, 2021}  	

\thanks{Erik Burman was partially supported by the EPSRC grants EP/P01576X/1 and EP/T033126/1. Riccardo Puppi was partially supported by ERC AdG project CHANGE n. 694515.}
\begin{abstract} 
We propose two different discrete formulations for the weak imposition of the Neumann boundary conditions of the Darcy flow. The Raviart-Thomas mixed finite element on both triangular and quadrilateral meshes is considered for both methods. One is a consistent discretization depending on a weighting parameter scaling as $\mathcal O(h^{-1})$, while the other is a penalty-type formulation obtained as the discretization of a perturbation of the original problem and relies on a parameter scaling as $\mathcal O(h^{-k-1})$, $k$ being the order of the Raviart-Thomas space. We rigorously prove that both methods are stable and result in optimal convergent numerical schemes with respect to appropriate mesh-dependent norms, although the chosen norms do not scale as the usual $L^2$-norm. However, we are still able to recover the optimal a priori $L^2$-error estimates for the velocity field, respectively, for high-order and the lowest-order Raviart-Thomas discretizations, for the first and second numerical schemes. Finally, some numerical examples validating the theory are exhibited.
 \end{abstract}
%
%\begin{resume} ... \end{resume}
%
\subjclass{65M60}
\keywords{Nitsche, penalty, Darcy, mixed finite element}
\maketitle
%%-----------------------------
%%      your text
%%-----------------------------
\section*{Introduction}
We consider the finite element approximation for the weak imposition of the Neumann boundary conditions for the Poisson problem in its mixed formulation, also known as Darcy's law in the context of fluid dynamics. 

Let us point out that the situation is dual with respect to the standard formulation of the Poisson problem: here the Neumann boundary conditions are essential and to the best of our knowledge it is not clear in the literature how to proceed in order to enforce them by manipulating the weak formulation rather than the functional spaces.

As far as the primary formulation is concerned, a wide variety of techniques have already been proposed and are now well-understood, the most prominent of which are undoubtedly the penalty method introduced in~\cite{babuska_penalty}, the Lagrange multipliers approach of~\cite{babuska_lagrange} and, of course, the Nitsche method developed in~\cite{nitsche} and later promoted in~\cite{STENBERG1995139}, where the author relates it to the stabilized Lagrange multiplier method of~\cite{barbosa_hughes}. 

In this work two different discrete formulations of the Darcy problem for the weak imposition of the Neumann boundary conditions are provided.
Both of them are based on the Raviart-Thomas finite element discretization for triangular and quadrilateral meshes. Let us notice that the two schemes do not add any additional degrees of freedoms and moreover, for simplicity, all dimensionless parameters have been set to $1$.

The first formulation is a consistent discretization of the Darcy system, hence it falls into the class of Nitsche-type methods. A weighting parameter scaling as $\mathcal O(h^{-1})$ needs to be introduced. Let us observe that this formulation had already appeared in the literature in~\cite{dangelo_scotti} for the lowest-order Raviart-Thomas element, in the context of incompressible flows in fractured media.

The latter, which is inspired by~\cite{konno_robin} and based on a perturbed variational principle, belongs instead to the family of penalty methods. In this case the penalization parameter scales as $\mathcal O(h^{-k-1})$, $k$ being the order of the Raviart-Thomas discretization, entailing a much more severe ill-conditioning of the resulting stiffness matrix.

We are able to prove that both formulations are stable and give rise to optimal convergent schemes with respect to suitable mesh-dependent norms, which do not scale as the $L^2$-norm as it is customary for the Darcy problem. At this point we are able to demonstrate super convergence results that allow us to find an optimal a priori estimate of the velocity error with respect to the $L^2$-norm, respectively, for any higher order Raviart-Thomas discretization combined with the first method and for the lowest order element with the second formulation.

Note that this work should be considered as a preliminary step towards the much more involved situation of an underlying mesh which is not fitted with the boundary of the physical domain~\cite{NME:NME4823}.

%However the imposition of the essential boundary conditions takes a back seat in this paper. Moreover, we remark that there is an underlying problem in the analysis of the method for what concerns the approximation properties of the velocity field: the presence of the boundary term in the norm cannot guarantee convergence due to the lowest order Raviart-Thomas discretization. In our manuscript we overcame the limitations in the analysis of this method. 

Let us briefly sketch the outline of the paper. In the next two sections we introduce, respectively, the strong formulation of the Darcy problem and a singularly perturbed formulation of it, parametrized by $\eps\ge 0^{+}$. For the latter we are able to show that its solution is stable independently of $\eps$ and that, for $\eps\to 0^+$, it converges to the solution of the original problem under some extra regularity assumptions on the data and on the boundary. In the third section the Raviart-Thomas finite element is introduced together with our two discrete formulations, both depending on a mesh-dependent weighting parameter $\gamma$. As already mentioned, for the first one, $\gamma = h^{-1}$, while for the other $\gamma=h^{-\left(k+1\right)}$. In section 4 we prove the desired stability estimates, with respect to different mesh-dependent norms, guaranteeing the well-posedness of the associated problems. Then, in section 5, optimal a priori error estimates, in terms of the chosen norms, are demonstrated for the velocity and pressure fields. We demonstrate some super convergent results that enable us, for the two methods, to recover optimality for the $L^2$-error of the velocity field as well. Finally, some two-dimensional numerical examples  are provided in order to corroborate the theory.
\clearpage
%----------------------------------------THE CONTINUOUS PROBLEM-----------------------------------------
\section{The Darcy problem and its variational formulation}
%------------------------------------------------ Notation-------------------------------------------------
We introduce some useful notations for the forthcoming analysis. Let $D$ be a Lipschitz-regular domain (subset, open, bounded, connected) of $\mathbb R^d$, $d\in\{2,3\}$. Standard Sobolev spaces $H^s(D)$ for any $s\in\R$ and $H^t(\gamma)$ for $t\in\left[-1,1\right]$ are defined on the domain $D$ and on a non-empty open subset of its boundary $\gamma\subset\partial D$, see~\cite{adams}, with the convention $H^0(D):=L^2(D)$, $H^0(\gamma):=L^2(\gamma)$. Moreover, we introduce the following usual notations:
\begin{align*}
	L^2_0(D):=& L^2(D)/\R, \\
	H_{0,\gamma}(D):= & \text{ closure of } C_{0,\gamma}^\infty(D) \text{ with respect to } \norm{\cdot}_{H^s(D)}=\{v\in H(D): \restr{ v}{\gamma}=0 \}, \\
	H_{f,\gamma}(D):= &\{v\in H(D): \restr{ v}{\gamma}=f \}, \\
	\bm H^s(D):=&\left( H^s(D)\right)^d, \quad \bm H_{0,\gamma}^s(D):=\left( H_{0,\gamma}^s(D)\right)^d,\quad \bm H_{f,\gamma}^s(D):=\left( H_{f,\gamma}^s(D)\right)^d,\quad \bm H^s(\gamma):= \left( H^t(\gamma)\right)^d,\\
	\bm H(\dive;D):=& \{\vv\in \bm H^0(D): \dive \vv\in H^0(D)\},\\
	\bm H_{0,\gamma}(\dive;D):= & \text{ closure of } \left(C_{0,\gamma}^\infty(D)\right)^d \text{ with respect to } \norm{\cdot}_{H(\dive;D)}=\{\vv\in \bm H(\dive;D): \restr{\vv\cdot\n}{\gamma}=0 \}, \\
	\bm H_{\sigma,\gamma}(\dive;D):= & \{\vv\in \bm H(\dive;D): \restr{\vv\cdot\n}{\gamma}=\sigma \}, \\
	\bm H^{0}(\dive;D):= &\{\vv\in \bm H(\dive;D): \dive \vv=0 \},
\end{align*}
where the divergence operator and the traces on $\gamma$ are defined in the sense of distributions, see~\cite{monk}. For the sake of convenience we are going to employ the same notation $\abs{\cdot}$ for the volume (Lebesgue) and surface (Hausdorff) measures of $\R^d$.

We also denote as $\mathbb Q_{r,s,t}$ the vector space of polynomials of degree at most $r$ in the first variable, at most $s$ in the second and at most $t$ in the third one (analogously for th case $d=2$), $\mathbb P_u$ the vector space of polynomials of degree at most $u$. For the sake of simplicity of the notation we may write $\mathbb Q_k$ instead of $\mathbb Q_{k,k}$ or $\mathbb Q_{k,k,k}$.

Note that throughout this document $C$ will denote generic constants that may change at each occurrence, but that are always independent of the local mesh size.

 Let $\Omega$ be a Lipschitz-regular domain of $\R^d$, $d\in\{2,3\}$. We assume its boundary $\Gamma$ to be partitioned into $\Gamma=\Gamma_N\cup\Gamma_D$ with $\Gamma_N\cap\Gamma_D=\emptyset$.
 Let us consider the following problem, often associated to a linearized model for the flow of groundwater through our domain $\Omega$, here representing a saturated porus medium with permeability $\kappa$.
  Given $\f\in L^2(\Omega;\R^d)$, $g\in L^2(\Omega)$, $u_N\in H^{-\frac{1}{2}}(\Gamma_N)$, $p_D\in H^{\frac{1}{2}}(\Gamma_D)$, we look for $\left(\u,p\right)\in  H_{u_N,\Gamma_N}(\dive;\Omega)\times L^2(\Omega)$ such that
\begin{equation}\label{eq:prob_cont}
	\begin{cases}
		\kappa^{-1} \u -\nabla p = \f \qquad & \text{in}\;\Omega,\\
		\dive\u = g\qquad&\text{in}\;\Omega,\\
		\u\cdot\n=u_N\qquad&\text{on}\;\Gamma_N,\\
		p=p_D\qquad&\text{on}\;\Gamma_D.
	\end{cases}
\end{equation}
The unknowns $\u$ and $p$ represent, respectively, the seepage velocity and the pressure of the fluid.  The first equation of \eqref{eq:prob_cont} is called \emph{Darcy law} relating the velocity and the pressure gradient of the fluid, the second one expresses \emph{mass conservation}, the third and the fourth equations are, respectively, a \emph{Neumann boundary condition} for the velocity field and a \emph{Dirichlet boundary condition} for the pressure. Moreover, $\kappa\in\mathbb R^{d\times d}$ is symmetric positive definite with eigenvalues $\lambda_i$ such that $0<\lambda_{\min}\le\lambda_i\le\lambda_{\max}<+\infty$, for every $i=1,\dots,d$.

\begin{remark}
Contrary to the case of the Poisson problem, here Dirichlet boundary conditions for the pressure are \emph{natural}, in the sense that they can be implicitly enforced in the weak formulation of the problem, while Neumann boundary conditions for the velocity are \emph{essential}, i.e., they are imposed on the functional space. Moreover, let us observe that in the case of purely Neumann boundary conditions, in order to have well-posedness, we have to ``filter out'' the constant pressures, i.e., the trial and test functions for the pressures are required to lie in $ L^2_0(\Omega)$, and to impose a compatibility condition on the data: $\int_\Gamma u _N = \int_\Omega g$.
\end{remark}

\section{A perturbed formulation}
Find $\left(\u^\eps,p^\eps\right)\in  H(\dive;\Omega)\times L^2(\Omega)$ such that
\begin{equation}\label{eq:prob_cont_bis}
	\begin{cases}
		\kappa^{-1} \u^\eps -\nabla p^\eps = \f \qquad & \text{in}\;\Omega,\\
		\dive\u^\eps = g\qquad&\text{in}\;\Omega,\\
		\eps^{-1}\u^\eps\cdot\n=\eps^{-1} u_N -p^\eps\qquad&\text{on}\;\Gamma_N,\\
		p^\eps=p_D\qquad&\text{on}\;\Gamma_D.
	\end{cases}
\end{equation}
Note that as $\eps\to 0^+$ problem~\eqref{eq:prob_cont_bis} formally degenerates to~\eqref{eq:prob_cont}. In this sense~\eqref{eq:prob_cont_bis} is a perturbation of problem~\eqref{eq:prob_cont}. 

In the subsequent analysis we are going to consider, for the sake of simplicity, $\kappa=I$ the identity matrix. 

\begin{proposition}\label{prop:conv_eps}
	Let $(\u^\eps,p^\eps),(\u,p)$ be respectively the solutions to~\eqref{eq:prob_cont_bis} and~\eqref{eq:prob_cont}, then there exists $C>0$ such that
	\begin{align*}
\norm{\u-\u^\eps}_{L^2(\Omega)} \le  C \eps\left(\norm{\dive \f}_{L^2(\Omega)} + \norm{g}_{L^2(\Omega)} + \norm{u_N}_{H^{-\frac{1}{2}}(\Gamma_N)} + \norm{p_D}_{H^\frac{1}{2}(\Gamma_D)}	\right),
	\end{align*} 
\end{proposition}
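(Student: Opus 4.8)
The plan is to estimate the difference $(\u-\u^\eps,\,p-p^\eps)$ directly, exploiting that the only discrepancy between~\eqref{eq:prob_cont} and~\eqref{eq:prob_cont_bis} is a boundary term of size $\mathcal O(\eps)$ on $\Gamma_N$, and that the penalty term in~\eqref{eq:prob_cont_bis} is sign-definite, which yields a bound on $p^\eps$ uniform in $\eps$. First I would subtract the two systems (recall $\kappa=I$). Setting $\bdelta:=\u-\u^\eps$ and $r:=p-p^\eps$, the Darcy law and mass conservation give $\bdelta=\nabla r$ and $\dive\bdelta=0$ in $\Omega$; the pressure conditions give $\restr{r}{\Gamma_D}=0$; and combining $\u\cdot\n=u_N$ with $\u^\eps\cdot\n=u_N-\eps\,p^\eps$, the normal trace satisfies $\restr{\bdelta\cdot\n}{\Gamma_N}=\eps\,p^\eps$. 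Testing $\bdelta=\nabla r$ against $\bdelta$ and integrating by parts, the volume term vanishes since $\dive\bdelta=0$ and the $\Gamma_D$ contribution drops since $r=0$ there, leaving the energy identity
\begin{equation*}
\norm{\bdelta}_{L^2(\Omega)}^2=\int_\Omega\bdelta\cdot\nabla r=\eps\int_{\Gamma_N}p^\eps\,r .
\end{equation*}

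Next I would absorb the right-hand side. Bounding it by $\eps\,\norm{p^\eps}_{L^2(\Gamma_N)}\norm{r}_{L^2(\Gamma_N)}$, then using the trace inequality followed by the Poincar\'e inequality on $\{v\in H^1(\Omega):\restr{v}{\Gamma_D}=0\}$ (legitimate since $\Gamma_D$ has positive measure), one gets $\norm{r}_{L^2(\Gamma_N)}\le C\norm{r}_{H^1(\Omega)}\le C\norm{\nabla r}_{L^2(\Omega)}=C\norm{\bdelta}_{L^2(\Omega)}$. Dividing by $\norm{\bdelta}_{L^2(\Omega)}$ yields $\norm{\u-\u^\eps}_{L^2(\Omega)}\le C\,\eps\,\norm{p^\eps}_{L^2(\Gamma_N)}$, so it only remains to bound the trace of $p^\eps$ uniformly in $\eps$.

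The crux is this last uniform bound. Eliminating $\u^\eps=\f+\nabla p^\eps$ shows that $p^\eps$ solves the mixed Dirichlet--Robin problem $-\Delta p^\eps=\dive\f-g$ in $\Omega$, $\restr{p^\eps}{\Gamma_D}=p_D$, $\partial_{\n}p^\eps+\eps\,p^\eps=u_N-\f\cdot\n$ on $\Gamma_N$. After lifting $p_D$, its weak form has bilinear part $\int_\Omega\nabla p^\eps\cdot\nabla q+\eps\int_{\Gamma_N}p^\eps q$, which is coercive on $\{v\in H^1(\Omega):\restr{v}{\Gamma_D}=0\}$ with a constant independent of $\eps$: the boundary term is nonnegative and may simply be discarded in the lower bound, after which Poincar\'e applies. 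Testing against $q$ produces the volume term $\int_\Omega(\dive\f-g)q$ and the boundary term $\int_{\Gamma_N}(u_N-\f\cdot\n)q$, both controlled by the data once $\f$ is measured in $\bm H(\dive;\Omega)$ (so that $\f\cdot\n\in H^{-1/2}(\Gamma_N)$); this is exactly where $\norm{\dive\f}_{L^2(\Omega)}$ enters. A uniform Lax--Milgram estimate then gives $\norm{p^\eps}_{H^1(\Omega)}\le C(\norm{\dive\f}_{L^2(\Omega)}+\norm{g}_{L^2(\Omega)}+\norm{u_N}_{H^{-\frac12}(\Gamma_N)}+\norm{p_D}_{H^{\frac12}(\Gamma_D)})$, hence $\norm{p^\eps}_{L^2(\Gamma_N)}\le C\norm{p^\eps}_{H^1(\Omega)}$ is bounded uniformly, and combining with the previous step delivers the claim.

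I expect this $\eps$-uniform stability of $p^\eps$ to be the main obstacle: the Robin coefficient $\eps^{-1}$ appearing in~\eqref{eq:prob_cont_bis} blows up as $\eps\to0^+$, and one must verify that this degeneration is harmless. The reformulation above is meant to make transparent that it is precisely the correct sign of the penalty term that prevents any loss of coercivity, so that the stability constant stays bounded uniformly in $\eps$.
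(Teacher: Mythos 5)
Your argument is correct and follows essentially the same route as the paper: both reduce everything to the pressure difference $r=p-p^\eps$, derive the same energy identity (your $\eps\int_{\Gamma_N}p^\eps r$ is algebraically identical to the paper's $\eps\langle p,r\rangle_{\Gamma_N}-\eps\norm{r}^2_{L^2(\Gamma_N)}$ since $p^\eps=p-r$), and close with the trace and Poincar\'e inequalities. The only deviation is that the paper bounds the right-hand side via the stability of the unperturbed pressure $p$, whereas you invoke the $\eps$-uniform stability of $p^\eps$ --- a correct but slightly longer detour, since it additionally requires observing that the nonnegative Robin term may be discarded in the coercivity bound.
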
	
provided that $\f\in \bm H(\dive;\Omega)$.
\begin{proof}
Let us observe that if $\left( \u,p\right)$ and $\left( \u^\eps,p^\eps\right)$ solve, respectively, the problems~\eqref{eq:prob_cont} and~\eqref{eq:prob_cont_bis}, then $p$ and $p^\eps$ are the solutions of
 \begin{align}
&\begin{cases}\label{burman:eq10}
-\Delta p = -g +\dive \f \qquad&\text{in}\;\Omega,\\
\frac{\partial p}{\partial n} = u_N\qquad&\text{on}\;\Gamma_N,\\
p = p_D	\qquad&\text{on}\;\Gamma_D,
\end{cases}	
\\
&\begin{cases}\label{burman:eq11}
	-\Delta p^\eps = -g +\dive \f \qquad&\text{in}\;\Omega,\\
	\frac{\partial p^\eps}{\partial n} + \eps p^\eps = u_N \qquad&\text{on}\;\Gamma_N,\\
	p^\eps = p_D	\qquad&\text{on}\;\Gamma_D.
\end{cases}	
\end{align}
Let $\delta:= p -p^\eps$, with $p$ and $p^\eps$ respectively the solutions of~\eqref{burman:eq10} and~\eqref{burman:eq11}, then $\delta$ solves
\begin{align}&\begin{cases}\label{burman:eq12}
-\Delta \delta = 0 \qquad& \text{in}\;\Omega, \\
\delta +\eps^{-1}\frac{\partial \delta}{\partial n} = p \qquad &\text{on}\;\Gamma_N,\\
\delta = 0	\qquad&\text{on}\;\Gamma_D.
\end{cases}
\end{align}
We rewrite~\eqref{burman:eq12} in variational form. Find $\delta \in H^1_{0,\Gamma_D}(\Omega)$ such that
\begin{align}\label{burman:eq13}
\left( \nabla \delta,\nabla \varphi\right)_\Omega + \eps \langle \delta, \varphi\rangle_{\Gamma_N} = \eps \langle p,\varphi \rangle_{\Gamma_N}\qquad\forall\ \varphi\in H^1_{0,\Gamma_D}(\Omega).
\end{align}
From standard theory,~\eqref{burman:eq13} is well-posed and, in particular, the bilinear form inducing its left hand side is coercive, meaning that
\begin{align}\label{burman:eq14}
 \norm{\delta}^2_{H^1(\Omega)} \lesssim \norm{\nabla \delta}^2_{L^2(\Omega)} + \eps \norm{\delta}^2_{L^2(\Gamma_N)}.
\end{align}
By combining~\eqref{burman:eq13},~\eqref{burman:eq14}, the Cauchy-Schwarz and a standard trace inequality, we get
\begin{align}\label{burman:eq15}
	\norm{\delta}^2_{H^1(\Omega)} \lesssim \eps \norm{p}_{H^1(\Omega)}\norm{\delta}_{H^1(\Omega)}.
\end{align}
On the other hand, $p$ solves~\eqref{burman:eq10}, hence
\begin{align*}
\norm{\delta}^2_{H^1(\Omega)} \lesssim \eps \left(\norm{\dive \f}_{L^2(\Omega)} + \norm{g}_{L^2(\Omega)} + \norm{u_N}_{H^{-\frac{1}{2}}(\Gamma_N)} + \norm{p_D}_{H^{\frac{1}{2}}(\Gamma_D)}	\right)\norm{\delta}_{H^1(\Omega)}. 
\end{align*}	
 Since $\u-\u^\eps = \nabla p + \f - \left( \nabla p^\eps +\f \right) = \nabla \left( p-p^\eps \right) = \nabla \delta$, then we are done.
\end{proof}
In order to avoid technicalities, let us assume $\Omega$ to be a convex domain with  a $C^2$ boundary and the Neumann data to be homogeneous.
\begin{proposition}\label{prop:stability_eps_simple}
	Let $\left(\u^\eps,p^\eps\right)$ be the solution of~\eqref{eq:prob_cont_bis} and suppose that $\Omega$ is convex, $\Gamma$ is $C^2$ and $u_N=0$. Then there exists $C>0$, independent of $\eps$, such that
	\begin{align}\label{burman:eq20bis_simple}
	\norm{\u^\eps}_{H^{1}(\Omega)} \le  C \left( \norm{\f}_{H^1(\Omega)} + \norm{g}_{L^{2}(\Omega)}   \right),
\end{align}
provide that $\f\in \bm H^{1}(\Omega)$.
\end{proposition}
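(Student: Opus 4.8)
The idea is to pass to the scalar elliptic problem~\eqref{burman:eq11} satisfied by $p^\eps$ and to use Darcy's law $\u^\eps=\nabla p^\eps+\f$ to transfer regularity from the pressure to the velocity. Since
\begin{align*}
\norm{\u^\eps}_{H^1(\Omega)}\le \norm{\u^\eps}_{L^2(\Omega)}+\abs{\u^\eps}_{H^1(\Omega)}\le \norm{\u^\eps}_{L^2(\Omega)}+\abs{p^\eps}_{H^2(\Omega)}+\abs{\f}_{H^1(\Omega)},
\end{align*}
it is enough to bound $\norm{\u^\eps}_{L^2(\Omega)}$ and the $H^2$-seminorm $\abs{p^\eps}_{H^2(\Omega)}$ by $C\bigl(\norm{\f}_{H^1(\Omega)}+\norm{g}_{L^2(\Omega)}\bigr)$ with $C$ independent of $\eps$. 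Observe that, since $u_N=0$, the boundary condition in~\eqref{burman:eq11} is the homogeneous Robin condition $\partial_n p^\eps+\eps p^\eps=0$ on $\Gamma_N$, and that $-\Delta p^\eps=\dive\f-g$, so that $\norm{\Delta p^\eps}_{L^2(\Omega)}\le\norm{\f}_{H^1(\Omega)}+\norm{g}_{L^2(\Omega)}$.

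For the $L^2$-part I would not argue directly, but rather recycle the convergence estimate already established. Since $\f\in\bm H^1(\Omega)\subset\bm H(\dive;\Omega)$, Proposition~\ref{prop:conv_eps} applies and, by the triangle inequality, $\norm{\u^\eps}_{L^2(\Omega)}\le\norm{\u}_{L^2(\Omega)}+\norm{\u-\u^\eps}_{L^2(\Omega)}\le\norm{\u}_{L^2(\Omega)}+C\eps\bigl(\norm{\dive\f}_{L^2(\Omega)}+\norm{g}_{L^2(\Omega)}+\norm{p_D}_{H^{1/2}(\Gamma_D)}\bigr)$. As the unperturbed solution $\u$ is fixed and bounded by the data, this furnishes a bound on $\norm{\u^\eps}_{L^2(\Omega)}$ that is uniform as $\eps\to 0^+$.

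The heart of the matter is the $\eps$-uniform $H^2$-regularity of the Robin problem~\eqref{burman:eq11}. Here I would use that $\Omega$ is convex with $C^2$ boundary and appeal to the Grisvard-type second-derivative identity
\begin{align*}
\abs{p^\eps}_{H^2(\Omega)}^2=\norm{\Delta p^\eps}_{L^2(\Omega)}^2+\int_{\Gamma}\mathcal B(p^\eps)\,ds,
\end{align*}
where $\mathcal B(p^\eps)$ gathers the tangential derivatives of $p^\eps$ contracted with the second fundamental form together with terms involving $\partial_n p^\eps$. Inserting the Robin relation $\partial_n p^\eps=-\eps p^\eps$, the convexity of $\Omega$ renders the second-fundamental-form contribution favourably signed, while the remaining terms generated by the boundary condition are either non-positive or proportional to $\eps\ge 0$, hence controllable. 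This should yield $\abs{p^\eps}_{H^2(\Omega)}\le C\norm{\Delta p^\eps}_{L^2(\Omega)}$ with $C$ depending only on $\Omega$, and in particular not on $\eps$.

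Collecting the two estimates and using $\norm{\Delta p^\eps}_{L^2(\Omega)}\le\norm{\f}_{H^1(\Omega)}+\norm{g}_{L^2(\Omega)}$ gives~\eqref{burman:eq20bis_simple}. I expect the genuine obstacle to be the last step: one must verify that the boundary terms produced by the Robin condition in the Grisvard identity remain bounded as $\eps\to 0^+$, so that the $H^2$-regularity constant does not degenerate. Convexity (nonnegative curvature) and the sign $\eps\ge 0$ are precisely the features that should make these terms harmless, interpolating between the Neumann ($\eps=0$) and Dirichlet ($\eps\to\infty$) regularity results, both of which hold with $\Omega$-dependent constants on convex $C^2$ domains.
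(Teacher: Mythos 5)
Your proposal is correct and its core is the same as the paper's: reduce to the Robin problem~\eqref{burman:eq11} for $p^\eps$, apply Grisvard's identity for $\bm H^1$ vector fields to $\nabla p^\eps$ on the convex $C^2$ domain, note that the Robin condition $\frac{\partial p^\eps}{\partial n}=-\eps p^\eps$ makes the cross boundary term equal to $2\eps\norm{\nabla_T p^\eps}^2_{L^2(\Gamma)}\ge 0$ while convexity (non-positivity of $\mathcal B$) gives the right sign to the curvature terms, conclude $\sum_{i,j}\norm{D^2_{x_ix_j}p^\eps}^2_{L^2(\Omega)}\le\norm{\Delta p^\eps}^2_{L^2(\Omega)}$ with a constant independent of $\eps$, and finish with $\u^\eps=\nabla p^\eps+\f$. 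This is exactly the paper's mechanism for the leading-order term, and your heuristic that convexity and $\eps\ge0$ make the Robin boundary contributions harmless is precisely what the computation confirms.

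The one place you genuinely deviate is the treatment of the lower-order terms. The paper needs the full $H^2$ norm of $p^\eps$, so it bounds $\norm{\nabla p^\eps}_{L^2(\Omega)}$ by testing $-\Delta p^\eps$ against $p^\eps$ (the Robin term again has a favourable sign) and then $\norm{p^\eps}_{L^2(\Omega)}$ via the compatibility condition $\int_\Gamma p^\eps=0$ and the Friedrichs inequality; every constant is uniform for all $\eps\ge0$. You instead use only the $H^2$ seminorm and recover $\norm{\u^\eps}_{L^2(\Omega)}$ from Proposition~\ref{prop:conv_eps} and the triangle inequality. This works, with two small caveats: (i) you must additionally invoke the a priori stability of the unperturbed problem~\eqref{eq:prob_cont} to bound $\norm{\u}_{L^2(\Omega)}$ by the data, which you assert but do not justify; and (ii) the resulting constant behaves like $1+\eps$, hence is uniform only on bounded ranges of $\eps$ --- sufficient for the intended application in Theorem~\ref{apriori:theorem1_pert} where $\eps=h$, but weaker than the paper's self-contained route. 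Neither caveat is a genuine gap.
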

\begin{proof}
	As before, let us consider an equivalent formulation for~\eqref{eq:prob_cont_bis} and, without loss of generality, put ourselves in the pure Neumann case $\Gamma=\Gamma_N$. We observe that $\left(\u^\eps,p^\eps \right)$ is the solution to~\eqref{eq:prob_cont_bis} if and only if $p^\eps$ solves
	\begin{align}
		\begin{cases}\label{burman:eq16_simple}
			-\Delta p^\eps = -g +\dive \f \qquad&\text{in}\;\Omega,\\
			\frac{\partial p^\eps}{\partial n} + \eps p^\eps = 0 \qquad&\text{on}\;\Gamma.
		\end{cases}	
	\end{align}
	Let us recall that given $\w:\Omega\to \R^d$ and $\varphi:\Omega\to\R$, the following decompositions hold in an open neighborhood of the boundary
	\begin{align*}
		\w = \w_T + w_n \n,\qquad \nabla \varphi = \nabla_T \varphi + \frac{\partial\varphi}{\partial n}\n,
	\end{align*}
	$\w_T$ and $w_n$ being, respectively, the tangent and normal components of $\w$. The operator $\nabla_T:H^1(\Gamma)\to\bm L^2_T(\Gamma)$ is the tangential gradient and can be defined as in~\cite{monk}.
%	 We recall as well the definition of the operator $\dive_T:\bm L^2_T(\Gamma)\to H^{-1}(\Gamma)$, the tangential divergence, defined as the adjoint of the tangential gradient, namely, for every $\varphiarphiv \in \bm L_T^2(\Gamma)$ and $p\in H^1(\Gamma)$
%	\begin{align}
%		\langle \dive_T \varphiarphiv, p \rangle_\Gamma  = \langle \varphiarphiv, \nabla_T p \rangle_\Gamma,
%	\end{align}	
%	with $\bm L_T^2(\Gamma):= \{\varphiarphiv \in \bm L^2(\Gamma): \varphiarphiv\cdot\n = 0 \}$.
	Theorem~3.1.1.1 in~\cite{grisvard} states that if $\Omega$ is open, bounded with $\Gamma$ of class $C^2$, then every $\vv\in\bm H^1(\Omega)$ satisfies
	\begin{align}\label{burman:eq17_simple}
		\norm{\dive \vv}^2_{L^2(\Omega)} -\sum_{i,j=1}^d \left(D_j v_i, D_i v_j  \right)_{\Omega} = -2\langle \vv_T, \nabla_T v_n \rangle_\Gamma - \int_{\Gamma} \{ (\operatorname{tr}\mathcal B)v_n^2 +\mathcal B(\vv_T,\vv_T)  \}, 
	\end{align}
	$\mathcal B(\cdot,\cdot)$ being the second fundamental quadratic form associated to $\Gamma$ and $\operatorname{tr}\mathcal B$ its trace, see~\cite{grisvard} for the definitions. Let us apply~\eqref{burman:eq17_simple} to $\vv=\nabla p^\eps$. We have
	\begin{align}\label{burman:eq18_simple}
	\norm{\Delta p^\eps}^2_{L^2(\Omega)}- \sum_{i,j=1}^d \int_{\Omega} \abs{D^2_{x_ix_j}p^\eps}^2\ge 	-2\langle \nabla_T p^\eps, \nabla_T \frac{\partial p^\eps}{\partial n}\rangle_\Gamma,
	\end{align}	
	since
	\begin{align*}
		-\int_{\Gamma}	\mathcal B(\nabla_T p^\eps,\nabla_T p^\eps)\ge  0,\qquad -\int_{\Gamma} (\operatorname{tr}\mathcal B) \frac{\partial p^\eps}{\partial n}^2 \ge 0,
	\end{align*}	
	having used the non-positiveness of $\mathcal B$ due to the convexity of $\Omega$ (see~\cite{grisvard}). Moreover, the boundary conditions of~\eqref{eq:prob_cont_bis} imply
	\begin{align*}
		-2\langle \nabla_T p^\eps, \nabla_T \frac{\partial p^\eps}{\partial n}\rangle_\Gamma	=  2 \eps \norm{\nabla_Tp^\eps}^2_{L^2(\Gamma)},
	\end{align*}
	 hence~\eqref{burman:eq18_simple} in turn implies
	\begin{align*}
		\sum_{i,j=1}^d \int_{\Omega} \abs{D^2_{x_ix_j}p^\eps}^2 \le \norm{\Delta p^\eps}^2_{L^2(\Omega)} -2  \eps \norm{\nabla_T p^\eps}^2_{L^2(\Gamma)},
	\end{align*}
	and, in particular,
	\begin{align*}
		\sum_{i,j=1}^d \int_{\Omega} \abs{D^2_{x_ix_j}p^\eps}^2 \le \norm{\Delta p^\eps}^2_{L^2(\Omega)}.
	\end{align*}
	Let us bound the other terms which are part of $\norm{p^\eps}_{H^2(\Omega)}$. Green formula states
	\begin{align*}
		-\left(\Delta p^\eps, p^\eps   \right)_\Omega =\norm{\nabla p^\eps}^2_{L^2(\Omega)} - \langle \frac{\partial p^\eps}{\partial n} p^\eps\rangle_\Gamma,	
	\end{align*}	
	so that, using the boundary conditions of~\eqref{eq:prob_cont_bis},
	\begin{align*}
		\norm{\nabla p^\eps}^2_{L^2(\Omega)} =& 	-\left(\Delta p^\eps, p^\eps   \right)_\Omega +  \langle \frac{\partial p^\eps}{\partial n}, p^\eps\rangle_\Gamma =
		-\left(\Delta p^\eps, p^\eps   \right)_\Omega  - \langle \eps p^\eps, p^\eps \rangle_\Gamma \\
		\le & \norm{-\Delta p^\eps}_{L^2(\Omega)} \norm{p^\eps}_{L^2(\Omega)} -\eps \norm{p^\eps}^2_{L^2(\Gamma)},
	\end{align*}	
	implying
	\begin{align}\label{burman:eq20_simple}
		\norm{\nabla p^\eps}^2_{L^2(\Omega)} 
		\le  \norm{\Delta p^\eps}_{L^2(\Omega)} \norm{p^\eps}_{L^2(\Omega)}.
	\end{align}
	Because of the compatibility condition on the data when $\Gamma=\Gamma_N$, we have
	\begin{align*}
		\int_\Gamma p^\eps = \eps^{-1} \left( \int_\Gamma u_N - \int_\Gamma \frac{\partial p^\eps}{\partial n} \right) = \eps^{-1} \left( \int_\Gamma u_N - \int_\Omega  \dive \u^\eps \right) = \eps^{-1} \left( \int_\Gamma u_N - \int_\Omega g \right)=0,
	\end{align*}
	hence, using the Friedrichs inequality,
	\begin{align}\label{burman:eq19_simple}
		\norm{p^\eps}_{L^2(\Omega)}	\le C \left( \norm{\nabla p^\eps}_{L^2(\Omega)} + \int_\Gamma p^\eps \right) = 
		C \norm{\nabla p^\eps}_{L^2(\Omega)}.
	\end{align}	
	Combining~\eqref{burman:eq20_simple} and~\eqref{burman:eq19_simple}, we obtain
	\begin{align*}
		\norm{\nabla p^\eps}^2_{L^2(\Omega)} \le C \norm{\Delta p^\eps}_{L^2(\Omega)} \norm{\nabla p^\eps}_{L^2(\Omega)},
	\end{align*}	
	so that
	\begin{align*}
		\norm{\nabla p^\eps}_{L^2(\Omega)} \le C  \norm{\Delta p^\eps}_{L^2(\Omega)}.
	\end{align*}
	Hence,
	\begin{equation}\label{burman:eq21_simple}
		\begin{aligned}
		\norm{p^\eps}_{H^2(\Omega)} \le &\norm{p^\eps}_{L^2(\Omega)} + \norm{\nabla p^\eps}_{L^2(\Omega)} + \left(\sum_{i,j=1}^d \int_\Omega \abs{D^2_{x_i,x_j} p^\eps}^2 \right)^{\frac{1}{2}}	\le C \norm{\nabla p^\eps}_{L^2(\Omega)} + \norm{\Delta p^\eps}_{L^2(\Omega)}\\
		 \le &C \norm{\Delta p^\eps}_{L^2(\Omega)} \le C \left( \norm{\dive \f}_{L^2(\Omega)} + \norm{g}_{L^2(\Omega)}\right).
		 \end{aligned}
	\end{equation}	
Finally, using the relation $u^\eps = \nabla p^\eps +\f$, it holds
\begin{align*}
	\norm{\u^\eps}_{H^{1}(\Omega)} \le &\norm{\nabla p^\eps}_{H^{1}(\Omega)} + \norm{\f}_{H^{1}(\Omega)}\le \norm{p^\eps}_{H^{2}(\Omega)} + \norm{\f}_{H^{1}(\Omega)} \\
	\le & C  \left(  \norm{\dive \f}_{L^2(\Omega)} + \norm{g}_{L^2(\Omega)} \right) + \norm{\f}_{H^{1}(\Omega)}\\
	\le & C \left( \norm{\f}_{H^{1}(\Omega)} + \norm{g}_{L^2(\Omega)}   \right).
\end{align*}
\end{proof}
\begin{remark}\label{remark:bd}
Let us point out that the statement of Proposition~\ref{prop:stability_eps_simple} holds true when $\Omega$ is convex with a Lipschitz polygonal boundary $\Gamma$. We refer the interested reader to Remark~3.2.4.6 of~\cite{grisvard}.
\end{remark}
%----------------------------------------FINITE ELEMENT DISCRETIZATION-----------------------------------------
\section{The finite element discretization}\label{sec:fe_disc}
Let $\left(\mathcal T_h\right)_{h>0}$ denote a family of triangular or quadrilateral meshes  of $\Omega$. It will be useful to partition the collection of edges (or faces if $d=3$) $\mathcal F_h$ of $\mathcal T_h$ into three collections: the internal ones $\mathcal F_h^i$ and the ones lying on $\Gamma_N$ and on $\Gamma_D$, grouped respectively in $\mathcal F_h^\partial(\Gamma_N)$ and $\mathcal F_h^\partial (\Gamma_D)$, $\mathcal F_h^\partial = \mathcal F_h^\partial(\Gamma_N) \cup \mathcal F_h^\partial (\Gamma_D)$. For every $K\in\mathcal T_h$, $h>0$, let $h_K:=\operatorname{diam}(K)$ and $h:=\max_{K\in\mathcal T_h} h_K$. We assume the mesh to be \emph{shape-regular}, i.e., there exists $\sigma>0$, independent of $h$, such that $\max_{K\in\mathcal T_h} \frac{h_K}{\rho_K}\le \sigma$, $\rho_K$ being the diameter of the largest ball inscribed in $K$. Moreover, $\mathcal T_h$ is supposed to be \emph{quasi-uniform} in the sense that there exists $\tau>0$, independent of $h$, such that $\min_{K\in \mathcal T_h} h_K\ge \tau h$.
Let $\varphi:\Omega\to\R$ be smooth enough so that for every $K\in\mathcal T_h$ its restriction $\restr{\varphi}{K}$ can be extended up to the boundary $\partial K$. Then, for all $f\in\mathcal F_h^i$ and a.e. $x\in f$, we define the \emph{jump} of $\varphi$ as
\begin{align*}
	[\varphi]_f(x):=\restr{\varphi}{K_1}(x)-\restr{\varphi}{K_2}(x),
\end{align*}
where $f=\partial K_1\cap \partial K_2$. We may remove the subscript $f$ when it is clear from the context to which \emph{facet} (edge if $d=2$, face if $d=3$) we refer to.

In order to discretize problem~\eqref{eq:prob_cont}, we need to choose a suitable couple of subspaces $ V_h\subset H\left(\dive;\Omega\right)$ and $ Q_h\subset L^2(\Omega)$. In the following, $\hat K$ will be our \emph{reference element}, and, according to the type of mesh employed, it will be either the unit $d$-simplex, i.e., the triangle of vertices $(0,0),(1,0),(0,1)$, or the unit $d$-cube $\left[0,1\right]^d$. 
 
 For the triangular meshes, the \emph{Raviart-Thomas} finite element on $\hat K$ is
 \begin{equation*}
 	\mathbb{RT}_k(\hat K):=\left(\mathbb{P}_k(\hat K)\right)^d\oplus \bm x\tilde{\mathbb P}_k(\hat K),	
\end{equation*}
 while, in the case of quadrilaterals, it reads as follows (see, for instance, \cite{arnold}):
\begin{equation*}
	\mathbb{RT}_k(\hat K):=
	\begin{cases}
		\mathbb{Q}_{k+1,{k}}(\hat K)\times\mathbb{Q}_{k,k+1}(\hat K)\qquad&\text{if}\;d=2,\\
		\mathbb{Q}_{k+1,{k},k}(\hat K)\times \mathbb{Q}_{k,k+1,k}(\hat K)\times\mathbb{Q}_{k,k,k+1}(\hat K)\qquad&\text{if}\;d=3.
	\end{cases}
\end{equation*}
We map the reference element  to a general $K\in\mathcal T_h$ via the \emph{affine map} $F_K:\hat K\to K$, $F_K(\hat x):= B_K \hat x + b_K$, where $B_K\in \mathbb R^{d\times d}$ is diagonal and invertible, and $b_K\in\mathbb R^d$. For $H(\dive;\Omega)$, the natural way to transform functions from $\hat K$ to $K$ is through the \emph{Piola transform}. Namely, given $\hat \u:\hat K\to\mathbb R^d$, we define $\u=\mathcal P_K \hat \u: K\to\mathbb R^d$ by
\begin{equation*}
	\u(x)=\mathcal P_{K} \hat \u (x):=\abs{\det\left(B_K \right)}^{-1}B_K \hat\u(\hat x),\qquad\text{where}\qquad \hat x=B^{-1}_K\left(x-b_K\right).
\end{equation*}
For functions in $L^2(\Omega)$, we just compose them with the affine map, namely $\hat q:\hat K\to\R$ is transformed to $q=\hat q \circ F_K^{-1}:K\to\R$. In this way, we can define the finite-dimensional subspaces:
\begin{equation*}
\begin{aligned}
	V_h&:=\{\vv_h \in H\left(\dive;\Omega\right):\restr{\vv_h}{K}\in \mathbb{RT}_k(K)\quad\forall\ K\in\mathcal T_h\},\\
	Q_h&:=\{q_h\in L^2\left(\Omega\right):\restr{q_h}{K}\circ F_K\in\mathbb{P}_{k}\left(K\right)\quad\forall\ K\in\mathcal T_h\},\qquad\text{for triangles,}\\
	Q_h&:=\{q_h\in L^2\left(\Omega\right):\restr{q_h}{K}\circ F_K\in\mathbb{Q}_{k}\left(K\right)\quad\forall\ K\in\mathcal T_h\},\qquad\text{for quadrilaterals,}
\end{aligned}
\end{equation*}
where $\mathbb{RT}_k(K) :=\{\mathcal P_{K}\hat\w_h: \hat\w_h\in\mathbb {RT}_k(\hat K) \}$.  Remember that in the pure Neumann case, i.e., $\Gamma=\Gamma_N$, we have to filter out constant discrete pressures by imposing the zero average constraint to the space $Q_h$. 
%and modify the discrete velocities consequently. In this case, let us modify the discrete spaces as follows.
%\begin{align*}
%V_h:=\{\varphiarphiv_h \in V_h:\int_\Gamma \varphiarphiv_h\cdot\n =\int_\Omega g\},\qquad Q_h:=\{q_h\in Q_h:\int_{\Omega}q_h=0 \},\qquad\text{if $\Gamma=\Gamma_N$}.
%\end{align*}
%\begin{align*}
%Q_h:=\{q_h\in Q_h:\int_{\Omega}q_h=0 \},\qquad\text{if $\Gamma=\Gamma_N$}.
%\end{align*}
Let us construct the interpolation operator onto the discrete velocities $r_h:\prod_{K\in\mathcal T_h}\bm H^s(K^{\mathrm{o}})  \to V_h$, by gluing together the local interpolation operators $r_K:\bm H^s(K^{\mathrm{o}})\to \mathbb {RT}_k(K)$, $s>\frac{1}{2}$, $K^{\mathrm{o}}$ denoting the interior of $K$, and using the natural degrees of freedom of the Raviart-Thomas finite element. For every $\vv\in \bm H^s(K^{\mathrm{o}})$, $s>\frac{1}{2}$, $r_K$ is uniquely defined by:
\begin{equation*}
	\begin{cases}
	\langle r_K \vv\cdot \n_e, q_h \rangle_f= \langle \vv\cdot\n_e, q_h\rangle_f \qquad&\forall\ q_h\in\Psi_k(f), \\
		\left(r_K\vv, \w_h\right)_K = \left(\vv, \w_h\right)_K \qquad&\forall\ \w_h\in \Psi_k\left(K\right)\qquad\text{if}\;k>0,\\
	\end{cases}
\end{equation*}
where, for triangles,
\begin{equation*}
	\Psi_k\left(K\right):=\left(\mathbb P_{k-1}(K)\right)^d,\qquad \Psi_k(f):=\mathbb P_{k}(f),\\
\end{equation*}
and, for quadrilaterals,
\begin{equation*}
	\Psi_k\left(K\right):=
	\begin{cases}
		\mathbb{Q}_{k-1,{k}}(K)\times\mathbb{Q}_{k,k-1}(K)\qquad&\text{if}\;d=2,\\
		\mathbb{Q}_{k-1,{k},k}(K)\times \mathbb{Q}_{k,k-1,k}(K)\times\mathbb{Q}_{k,k,k-1}(K)\qquad&\text{if}\;d=3,
	\end{cases}
\qquad
	\Psi_k\left(f\right):=
\begin{cases}
\mathbb P_k(f)\qquad&\text{if}\;d=2,\\
\mathbb Q_k(f)\qquad&\text{if}\;d=3,
\end{cases}
\end{equation*}
for all facets $f$ and triangles or quadrilaterals $K$ of $\mathcal T_h$. The natural choice in order to interpolate onto $Q_h$ is to employ an elementwise $L^2$-orthogonal projection, i.e., $\Pi_h: L^2\left(\Omega\right)\to Q_h$ such that for every $K\in\mathcal T_h$, $\restr{\Pi_h}{K}:=\Pi_K$, where for $\xi\in L^2(\Omega)$,
\begin{align*}
\left(\Pi_K \xi, q_h\right)_K &= \left(\xi, q_h\right)_K \qquad \forall\ q_h\in Q_h.
\end{align*}
for every $K\in\mathcal T_h$.
\begin{remark}
It is worth mentioning that the following numerical analysis remains valid if we employ another H(div)-conforming discretization instead, such as the so-called Brezzi-Douglas-Marini mixed element~\cite{brezzi_boffi}.
\end{remark}
\begin{proposition}\label{prop:cd}
	The following diagram commutes:
	\begin{equation}\label{comm_diag}
		\begin{CD}
			H(\dive;\Omega) \cap \prod_{K\in\mathcal T_h}\bm H^s(K^{\mathrm{o}})   @>\dive>> L^2(\Omega) \\
			@VVr_hV @VV\Pi_hV \\
			V_h @>\dive>> Q_h.
		\end{CD}	
%	\qquad\;\;
%		\begin{CD}
%	H_0(\dive;\Omega) \cap \prod_{K\in\mathcal T_h}\bm H^s(K)    @>\dive>> L^2_0(\Omega) \\
%	@VVr_hV @VV\Pi_hV \\
%	V_h^0 @>\dive>> Q_h^0.
%\end{CD}		
	\end{equation}
In particular, it holds
\begin{align*}
\dive V_h= Q_h.%,\qquad \dive V_h^0 \subseteq Q_h^0.
\end{align*}
\end{proposition}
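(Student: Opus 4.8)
The plan is to reduce the commutativity of~\eqref{comm_diag} to the single pointwise identity $\dive(r_h\vv)=\Pi_h(\dive\vv)$, valid for every $\vv$ in the domain of $r_h$, and then to deduce the factorization $\dive V_h=Q_h$ by a lifting argument. Since $r_h$, $\Pi_h$ and the divergence all act elementwise, I would fix $K\in\mathcal T_h$ and prove $\dive(r_K\vv)=\Pi_K(\dive\vv)$. The decisive preliminary remark is that \emph{both} sides belong to $Q_h|_K$: the right-hand side does by definition of $\Pi_K$, while $\dive(r_K\vv)$ does because $\dive\,\mathbb{RT}_k(K)=Q_h|_K$. For triangles this is the count $\dive\bigl((\mathbb P_k(\hat K))^d\oplus\bm x\tilde{\mathbb P}_k(\hat K)\bigr)=\mathbb P_k(\hat K)$, and for quadrilaterals $\dive(\mathbb Q_{k+1,k}\times\mathbb Q_{k,k+1})=\mathbb Q_k$; in both cases the Piola transform commutes with the divergence up to the factor $\det(B_K)^{-1}$, and since $B_K$ is diagonal the $\mathbb Q_k$-structure is preserved under $F_K$, so the physical spaces match as well. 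Two elements of the finite-dimensional space $Q_h|_K$ agree as soon as their $L^2(K)$-moments against all $q_h\in Q_h|_K$ agree, so it suffices to test.

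Next I would integrate by parts: for arbitrary $q_h\in Q_h|_K$, $(\dive(r_K\vv),q_h)_K=-(r_K\vv,\nabla q_h)_K+\langle r_K\vv\cdot\n,q_h\rangle_{\partial K}$. The step I expect to require the most care is recognizing that $\nabla q_h$ and $q_h|_f$ land \emph{exactly} in the moment spaces that define the degrees of freedom of $r_K$: for triangles $\nabla q_h\in(\mathbb P_{k-1}(K))^d=\Psi_k(K)$ and $q_h|_f\in\mathbb P_k(f)=\Psi_k(f)$, while for quadrilaterals $\partial_{x_i}q_h$ lowers the degree only in the $i$-th variable so that $\nabla q_h\in\Psi_k(K)$, and the trace $q_h|_f$ lies in $\Psi_k(f)$ in both $d=2$ and $d=3$. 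Consequently the volume degrees of freedom give $(r_K\vv,\nabla q_h)_K=(\vv,\nabla q_h)_K$ (trivially when $k=0$, since then $\nabla q_h=0$), and the facet degrees of freedom give $\langle r_K\vv\cdot\n,q_h\rangle_{\partial K}=\langle\vv\cdot\n,q_h\rangle_{\partial K}$, the per-facet sign of $\n_e$ cancelling between $r_K\vv$ and $\vv$. Integrating by parts backwards yields $(\dive(r_K\vv),q_h)_K=(\dive\vv,q_h)_K=(\Pi_K(\dive\vv),q_h)_K$, whence $\dive(r_K\vv)=\Pi_K(\dive\vv)$. Glueing over $\mathcal T_h$, and observing that $r_h\vv\in H(\dive;\Omega)$ because the facet degrees of freedom depend only on the single-valued normal trace $\vv\cdot\n$ and hence coincide on shared facets, proves that~\eqref{comm_diag} commutes.

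Finally, for $\dive V_h=Q_h$, the inclusion $\dive V_h\subseteq Q_h$ is the degree count above combined with $\dive\vv_h\in L^2(\Omega)$. For the converse I would lift a given $q_h\in Q_h$ to a sufficiently regular velocity: extend $q_h$ by zero to a ball $B\supset\overline\Omega$ and solve the Dirichlet problem $\Delta\phi=\tilde q_h$ in $B$, $\phi=0$ on $\partial B$; elliptic regularity on the smooth convex domain $B$ gives $\phi\in H^2(B)$, so that $\vv:=\nabla\phi|_\Omega\in\bm H^1(\Omega)\subset\prod_{K}\bm H^s(K^{\mathrm o})$ for any $s\in(\tfrac12,1]$ lies in the domain of $r_h$ and satisfies $\dive\vv=\tilde q_h=q_h$ on $\Omega$. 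By the commuting property just established, $\dive(r_h\vv)=\Pi_h(\dive\vv)=\Pi_h q_h=q_h$ with $r_h\vv\in V_h$, so $q_h\in\dive V_h$; in the pure Neumann case only mean-zero $q_h$ are tested and the same construction applies. The only genuine difficulty in the whole argument is the bookkeeping of the second paragraph — matching $\nabla q_h$ and $q_h|_f$ with the precise moment spaces $\Psi_k(K)$ and $\Psi_k(f)$ across the triangular, quadrilateral and three-dimensional cases — everything else being standard integration by parts and elliptic regularity.
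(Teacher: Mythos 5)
Your proof is correct and follows essentially the same route as the paper: commutativity is established elementwise by testing $\dive(r_K\vv)$ against $Q_h|_K$, integrating by parts, and invoking the facet and volume degrees of freedom of $r_K$ (after checking that $\nabla q_h\in\Psi_k(K)$ and $q_h|_f\in\Psi_k(f)$), and the surjectivity $\dive V_h=Q_h$ then follows by lifting $q_h$ to an $\bm H^1$ velocity and applying the commuting diagram. The only difference is cosmetic: where the paper cites the surjectivity of $\dive:\bm H^1(\Omega)\to L^2(\Omega)$ from the literature, you reprove it via a Poisson problem on a ball containing $\overline\Omega$, which is the standard argument behind that citation.
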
	
\begin{proof}
%	This is a well-established result. See for instance~\cite{ern_guermond}. 
For the commutative diagram note that, for every $\vv\in H(\dive;\Omega) \cap \prod_{K\in\mathcal T_h}\bm H^s(K^{\mathrm{o}}) $, it holds
\begin{align*}
\left( \Pi_h \dive \vv,\varphi_h \right)_K	=& \left(\dive \vv,\varphi_h \right)_K = -\left( \vv,\nabla \varphi_h\right)_K +\langle \varphi_h, \vv\cdot\n\rangle_{\partial K}\\
 = &-\left( r_h\vv,\nabla \varphi_h\right)_K +\langle \varphi_h, r_h\vv \cdot\n\rangle_{\partial K} = \left(\dive r_h\vv ,\varphi_h\right)_K\qquad\forall\ \varphi_h\in\mathbb P_k(K) \;(\text{resp.}\; \mathbb Q_k(K)).
\end{align*}	
A direct calculation readily shows the inclusion $\dive V_h \subseteq Q_h$. Let us prove the other one. Let $q_h\in Q_h$, then, by the surjectivity of $\dive: \bm H^1(\Omega)\to\ L^2(\Omega)$~\cite{brezzi_boffi}, there exists $\vv\in H(\dive;\Omega)\cap \prod_{K\in\mathcal T_h}\bm H^s(K^{\mathrm{o}})\subseteq \bm H^1(\Omega)$ such that $\dive\vv=q_h$. Let us define $\vv_h:= r_h\vv$. Thanks to the commutativity diagram we have $\dive \vv_h=q_h$.
\end{proof}
%For the sake of simplicity of the exposition we are always going to denote the discrete spaces as $V_h$ and $Q_h$. The reader should keep in mind that that in the pure Neumann case one needs to resort the spaces $V_h^0$ and $Q_h^0$.

We are now ready to introduce the discrete formulations we want to analyze. 

\subsubsection{First formulation}
Find $\left(\u_h,p_h\right)\in V_h\times Q_h$ such that
\begin{align}\label{eq:prob_disc}
\begin{cases}
a_h(\u_h,\vv_h) + b_1(\vv_h,p_h) =\left(\f,\vv_h\right)_{\Omega}+   h^{-1}\langle u_N,\vv_h\cdot\n \rangle_{\Gamma_N}+\langle p_D,\vv_h\cdot\n\rangle_{\Gamma_D}\qquad&\forall\ \vv_h\in V_h,\\
b_m(\u_h,q_h) =\left(g,q_h\right)_{\Omega} - m \langle q_h, u_N \rangle_{\Gamma_N}\qquad\qquad\qquad\qquad&\forall\ q_h\in Q_h,
\end{cases}
\end{align}
where $m\in \{0,1\}$. Here,
\begin{align}
	a_h(\u_h,\vv_h):=\left(\u_h,\vv_h\right)_{\Omega}+  h^{-1} \langle\u_h\cdot\n,\vv_h\cdot\n \rangle_{\Gamma_N}\qquad&\forall\ \u_h,\vv_h\in V_h,\\
	b_m(\u_h,p_h):= \left(p_h,\dive \u_h\right)_{\Omega}- m \langle p_h,\u_h\cdot\n \rangle_{\Gamma_N}\qquad&\forall\ \u_h\in V_h,p_h\in Q_h.
\end{align}
In what follows just the analysis for the symmetric case $m=1$ will be presented, however numerical results will be provided for the case $m = 0$ as well.

\subsubsection{Second formulation}
Find $\left(\u_h,p_h\right)\in V_h\times Q_h$ such that
\begin{align}\label{eq:prob_disc_bis}
\begin{cases}
a_\eps(\u_h,\vv_h) + b_0(\vv_h,p_h) =\left(\f,\vv_h\right)_{\Omega} + \langle\eps^{-1} u_N,\vv_h\cdot\n \rangle_{\Gamma_N}+\langle p_D,\vv_h\cdot\n\rangle_{\Gamma_D}\qquad&\forall\ \vv_h\in V_h,\\
b_0(\u_h,q_h) =\left(g,q_h\right)_{\Omega} \qquad\qquad\qquad\qquad&\forall\ q_h\in Q_h.
\end{cases}
\end{align}
where
\begin{align*}
a_\eps(\u_h,\vv_h):=\left(\u_h,\vv_h\right)_{\Omega}+ \eps^{-1} \langle\u_h\cdot\n,\vv_h\cdot\n \rangle_{\Gamma_N}\qquad&\forall\ \u_h,\vv_h\in V_h.
\end{align*}
\begin{remark}
	Let us observe that the non-symmetric version of problem~\eqref{eq:prob_disc}, i.e., with $m=0$, and formulation~\eqref{eq:prob_disc_bis}, thanks to Proposition~\ref{prop:cd} allows for a \emph{weakly divergence-free} numerical solution $\u_h$, namely $\dive \u_h=0$ in the sense of $L^2$, provided that the right hand side $g$ vanishes.
\end{remark}
\begin{lemma}
Formulations~\eqref{eq:prob_disc} and~\eqref{eq:prob_disc_bis} are consistent discretizations of~\eqref{eq:prob_cont} and~\eqref{eq:prob_cont_bis} respectively.
\end{lemma}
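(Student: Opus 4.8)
The plan is to verify \emph{Galerkin consistency} directly: I would show that the exact solutions of the continuous problems, once inserted into the discrete bilinear forms and tested against arbitrary discrete functions, satisfy the discrete equations verbatim. Since $V_h\subset H(\dive;\Omega)$ and $Q_h\subset L^2(\Omega)$, the discrete test functions are admissible objects on which the continuous solution may act, so the only work is an algebraic manipulation combined with one integration by parts.

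For the first formulation \eqref{eq:prob_disc} (with $m=1$) I would substitute $(\u,p)$, the solution of \eqref{eq:prob_cont}, into $a_h(\u,\vv_h)+b_1(\vv_h,p)$. The key step is Green's formula $(p,\dive\vv_h)_\Omega = -(\nabla p,\vv_h)_\Omega + \langle p,\vv_h\cdot\n\rangle_{\Gamma_N} + \langle p,\vv_h\cdot\n\rangle_{\Gamma_D}$. Feeding the Darcy law $\u=\nabla p+\f$ into $(\u,\vv_h)_\Omega$, the two volume contributions $\pm(\nabla p,\vv_h)_\Omega$ cancel, and the $\Gamma_N$ pressure term produced by Green's formula is annihilated by the $-\langle p,\vv_h\cdot\n\rangle_{\Gamma_N}$ built into $b_1$. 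What survives is $(\f,\vv_h)_\Omega + h^{-1}\langle \u\cdot\n,\vv_h\cdot\n\rangle_{\Gamma_N} + \langle p,\vv_h\cdot\n\rangle_{\Gamma_D}$; imposing $\u\cdot\n=u_N$ on $\Gamma_N$ and $p=p_D$ on $\Gamma_D$ reproduces exactly the right-hand side of the first discrete equation. The divergence equation is immediate: $b_1(\u,q_h) = (g,q_h)_\Omega - \langle q_h,u_N\rangle_{\Gamma_N}$ follows from $\dive\u=g$ and $\u\cdot\n=u_N$.

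The second formulation \eqref{eq:prob_disc_bis} is handled identically, now substituting $(\u^\eps,p^\eps)$, the solution of \eqref{eq:prob_cont_bis}. After the same Green's-formula step (noting that $b_0$ carries no boundary term) the expression collapses to $(\f,\vv_h)_\Omega + \eps^{-1}\langle \u^\eps\cdot\n,\vv_h\cdot\n\rangle_{\Gamma_N} + \langle p^\eps,\vv_h\cdot\n\rangle_{\Gamma_N} + \langle p^\eps,\vv_h\cdot\n\rangle_{\Gamma_D}$. The decisive point here is the Robin-type condition $\eps^{-1}\u^\eps\cdot\n + p^\eps = \eps^{-1}u_N$ on $\Gamma_N$, which merges the two $\Gamma_N$ contributions into $\langle\eps^{-1}u_N,\vv_h\cdot\n\rangle_{\Gamma_N}$, while $p^\eps=p_D$ on $\Gamma_D$ disposes of the remaining term. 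The divergence equation $b_0(\u^\eps,q_h)=(g,q_h)_\Omega$ is again immediate from $\dive\u^\eps=g$.

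The only genuine subtlety, and the step I would be most careful about, is the well-definedness of the boundary pairings. A priori the data guarantee only $\u\cdot\n\in H^{-1/2}(\Gamma_N)$ and $p\in H^{1/2}(\Gamma_D)$, whereas the terms $h^{-1}\langle\u\cdot\n,\vv_h\cdot\n\rangle_{\Gamma_N}$ and $\eps^{-1}\langle\u^\eps\cdot\n,\vv_h\cdot\n\rangle_{\Gamma_N}$ require the normal trace to be paired in $L^2(\Gamma_N)$. I would therefore state consistency under the additional elliptic-regularity hypotheses already invoked for Proposition~\ref{prop:stability_eps_simple} (convexity with $C^2$ or convex polygonal boundary), under which $p,p^\eps\in H^2(\Omega)$ and the normal traces are square-integrable; with that regularity every pairing above is an honest $L^2(\Gamma)$ integral and all the integrations by parts are justified.
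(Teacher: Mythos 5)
Your proof is correct and follows essentially the same route as the paper: substitute the exact solution, integrate by parts, and use the boundary conditions (your direct merging of the two $\Gamma_N$ terms via the Robin condition is algebraically identical to the paper's ``static condensation'' of $p^\eps=\eps^{-1}(u_N-\u^\eps\cdot\n)$). Your closing caveat about needing enough regularity for the $L^2(\Gamma_N)$ pairings is a sensible refinement the paper leaves implicit, but it does not change the argument.
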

\begin{proof}
	It is clear that~\eqref{eq:prob_disc} is a consistent discetization of~\eqref{eq:prob_cont}.
	Let $\left(\u^\eps,p^\eps\right)$ be the solution to~\eqref{eq:prob_cont_bis}. Of course, we have
	\begin{align*}
	b_0(\u^\eps,q_h)=\left(g,q_h\right)\qquad\forall\ q_h\in Q_h.	
	\end{align*}		
	By integrating by parts the first equation of~\eqref{eq:prob_cont_bis}, we obtain
	\begin{align}\label{perturb5}
	\left(\u^\eps,\vv_h\right)_{\Omega} + b_0(\vv_h,p^\eps)-\langle p^\eps, \vv_h\cdot\n\rangle_{\Gamma_N} =\left(\f,\vv_h\right)_{\Omega}+\langle p_D,\vv_h\cdot\n \rangle_{\Gamma_D}\qquad\forall\ \vv_h\in V_h.
	\end{align}
	By performing \emph{static condensation} of the multiplier from the boundary conditions, we obtain
	\begin{align}\label{perturb6}
	p^\eps	= \eps^{-1}\left( \u_N-\u^\eps\cdot\n  \right)\qquad\text{on}\;\Gamma_N.
	\end{align}	
	Substituting~\eqref{perturb6} back into~\eqref{perturb5}, we obtain
	\begin{align*}
	a_\eps(\u^\eps,\vv_h) + b_0(\vv_h,p^\eps) =\left(\f,\vv_h\right)_{\Omega}+ \langle \eps^{-1}\u_N ,\vv_h\cdot\n \rangle_{\Gamma_N}+\langle p_D,\vv_h\cdot\n\rangle_{\Gamma_D}\qquad\forall\ \vv_h\in V_h.
	\end{align*}
\end{proof}	
For the numerical analysis of~\eqref{eq:prob_disc}, we endow the discrete spaces with the following mesh-dependent norms
\begin{align*}
	\norm{\vv_h}^2_{0,h}&:= \norm{\vv_h}^2_{L^2(\Omega)} + \sum_{f\in\mathcal F_h^\partial(\Gamma_N)} h^{-1}\norm{\vv_h\cdot\n}^2_{L^2(f)},  \\
	\norm{q_h}^2_{1,h}&:= \sum_{K\in\mathcal T_h}\norm{\nabla q_h}^2_{L^2(K)} + \sum_{f\in\mathcal F_h^i}h^{-1}\norm{\left[q_h\right]}^2_{L^2(f)} + \sum_{f\in\mathcal F_h^\partial(\Gamma_D)}h^{-1}\norm{q_h}^2_{L^2(f)},
\end{align*}
while for~\eqref{eq:prob_disc_bis} we are going to employ:
\begin{align*}
	\norm{\vv_h}^2_{0,h,\eps}&:= \norm{\vv_h}^2_{L^2(\Omega)} + \sum_{f\in\mathcal F_h^\partial(\Gamma_N)} \eps^{-1} \norm{\vv_h\cdot\n}^2_{L^2(f)},  \\
		\norm{q_h}^2_{1,h,\eps}&:= \sum_{K\in\mathcal T_h}\norm{\nabla q_h}^2_{L^2(K)} + \sum_{f\in\mathcal F_h^i}h^{-1}\norm{\left[q_h\right]}^2_{L^2(f)} + \sum_{f\in\mathcal F_h^\partial}h^{-1}\norm{q_h}^2_{L^2(f)},
\end{align*}
for every $\vv_h\in V_h$ and $q_h\in Q_h$.
\begin{remark}
	Informally speaking, the idea of both approaches is to unbalance the norms in order to go back to the elliptic case. Note that the natural functional setting for the mixed formulation of the Poisson problem is $H(\dive;\Omega)\times L^2(\Omega)$, but here we consider norms that induce the same topology as that of $\left[L^2(\Omega)\right]^d\times H^1(\Omega)$. Moreover, we observe that in both formulations~\eqref{eq:prob_disc} and~\eqref{eq:prob_disc_bis} a superpenalty parameter $\gamma$ is imposed in the flux variable. Indeed, the natural weight, mimicking the $H^{-\frac{1}{2}}$-scalar product, would be $h \langle\u_h\cdot\n, \vv_h\cdot\n \rangle_{\Gamma_N}$. However such a weight does not lead to an optimally converging scheme. In addition, this is also what destroys the conditioning (see subsection~\ref{conditioning}).
%	
%	Indeed, if we consider the mixed problem in its strong form~\eqref{eq:prob_cont}, we realize that this second choice allow us to go back to the elliptic case via static condensation of the dual variable. In other words the space of multipliers has been chosen large enough so that $\nabla Q\subseteq V$.
\end{remark}
%----------------------------------------------------------------------------------------------------
\section{Stability estimates}
In this section we carry on at the same time the proofs of the well-posedness of the two discrete formulations.
\begin{proposition}
	There exist $M_{a_h}, M_{a_{\eps}}, M_{b_m}>0$, $m=0,1$, such that	
	\begin{align*}	
		\abs{a_h (\w_h,\vv_h)}	\le M_{a_h} \norm{\w_h}_{0,h}\norm{\vv_h}_{0,h}\qquad &\forall\ \w_h,\vv_h\in V_h,\\
		\abs{a_{\eps} (\w_h,\vv_h)}	\le M_{a_\eps} \norm{\w_h}_{0,h,\eps}\norm{\vv_h}_{0,h,\eps}\qquad &\forall\ \w_h,\vv_h\in V_h,\\
		\abs{b_1(\vv_h,q_h)} \le  M_{b_1} \norm{\vv_h}_{0,h}\norm{q_h}_{1,h}\qquad &\forall\ \vv_h\in V_h,\ q_h\in Q_h, \\
		\abs{b_0(\vv_h,q_h)} \le  M_{b_0} \norm{\vv_h}_{0,h,\eps}\norm{q_h}_{1,h,\eps}\qquad &\forall\ \vv_h\in V_h,\ q_h\in Q_h.
	\end{align*}	
\end{proposition}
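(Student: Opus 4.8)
The plan is to treat the two volume--boundary forms $a_h,a_\eps$ by a direct Cauchy--Schwarz argument, and the two mixed forms $b_0,b_1$ by element-wise integration by parts combined with a discrete trace inequality.

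For $a_h$ I would split $a_h(\w_h,\vv_h)=(\w_h,\vv_h)_\Omega+\sum_{f\in\mathcal F_h^\partial(\Gamma_N)}h^{-1}\langle\w_h\cdot\n,\vv_h\cdot\n\rangle_f$ and bound each piece by Cauchy--Schwarz: the volume term by $\norm{\w_h}_{L^2(\Omega)}\norm{\vv_h}_{L^2(\Omega)}$, and the facet sum, after writing $h^{-1}=h^{-1/2}\cdot h^{-1/2}$, by a discrete Cauchy--Schwarz over the facets. Since the weight $h^{-1}$ on $\Gamma_N$ is exactly the one appearing in $\norm{\cdot}_{0,h}$, both contributions are dominated by $\norm{\w_h}_{0,h}\norm{\vv_h}_{0,h}$, with $M_{a_h}$ a constant independent of $h$. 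The estimate for $a_\eps$ is identical with $h^{-1}$ replaced by $\eps^{-1}$ and $\norm{\cdot}_{0,h}$ by $\norm{\cdot}_{0,h,\eps}$; no genuinely new ingredient enters.

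For the mixed forms the starting point is the element-wise Green formula $(q_h,\dive\vv_h)_\Omega=-\sum_K(\nabla q_h,\vv_h)_K+\sum_K\langle q_h,\vv_h\cdot\n\rangle_{\partial K}$. Because $\vv_h\in V_h\subset H(\dive;\Omega)$ its normal trace is single-valued across interior facets, so the interior boundary contributions assemble into $\sum_{f\in\mathcal F_h^i}\langle[q_h],\vv_h\cdot\n\rangle_f$, while the facets on $\Gamma_D$ and $\Gamma_N$ produce the corresponding single-sided terms. For $b_1$ the key observation is that the $\Gamma_N$ contribution coming from integration by parts cancels \emph{exactly} against the term $-\langle q_h,\vv_h\cdot\n\rangle_{\Gamma_N}$ built into $b_1$, leaving $b_1(\vv_h,q_h)=-\sum_K(\nabla q_h,\vv_h)_K+\sum_{f\in\mathcal F_h^i}\langle[q_h],\vv_h\cdot\n\rangle_f+\sum_{f\in\mathcal F_h^\partial(\Gamma_D)}\langle q_h,\vv_h\cdot\n\rangle_f$; every surviving term is controlled by $\norm{q_h}_{1,h}$, which is precisely why that norm needs no control of $q_h$ on $\Gamma_N$. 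For $b_0$ there is no such cancellation, but the surviving $\Gamma_N$ facet term is tamed because $\norm{\cdot}_{1,h,\eps}$ carries the $h^{-1}$ weight over the whole boundary $\mathcal F_h^\partial$.

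To close each term I would apply Cauchy--Schwarz and balance the mesh weights: the gradient term pairs $\norm{\nabla q_h}_{L^2(K)}$ with $\norm{\vv_h}_{L^2(K)}$; each facet term is split as $(h^{-1/2}\norm{[q_h]}_{L^2(f)}\ \text{or}\ h^{-1/2}\norm{q_h}_{L^2(f)})(h^{1/2}\norm{\vv_h\cdot\n}_{L^2(f)})$, so that the $q_h$-factors sum to $\norm{q_h}_{1,h}$ (resp.\ $\norm{q_h}_{1,h,\eps}$) and the $\vv_h$-factors $\sum_f h\norm{\vv_h\cdot\n}^2_{L^2(f)}$ are reduced, via the discrete trace inequality $\norm{\vv_h\cdot\n}_{L^2(f)}\le C h^{-1/2}\norm{\vv_h}_{L^2(K)}$ (valid by scaling to $\hat K$ together with quasi-uniformity) and the finite number of facets per element, to $C\norm{\vv_h}^2_{L^2(\Omega)}\le C\norm{\vv_h}^2_{0,h}$. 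The step requiring the most care is the bookkeeping of the boundary integrals after integration by parts --- identifying the interior jumps from $H(\dive)$-conformity and recognizing the exact cancellation on $\Gamma_N$ for $b_1$ --- since it is this cancellation that dictates the asymmetric design of the two families of mesh-dependent norms.
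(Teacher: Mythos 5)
Your proposal is correct and follows essentially the same route as the paper: Cauchy--Schwarz with the matching $h^{-1}$ (resp.\ $\eps^{-1}$) weights for $a_h$ and $a_\eps$, and for $b_1$, $b_0$ an element-wise integration by parts producing interior jumps and boundary facet terms, closed with the discrete inverse inequality $h^{1/2}\norm{\vv_h\cdot\n}_{L^2(f)}\lesssim\norm{\vv_h}_{L^2(K)}$. The only cosmetic difference is that the paper additionally invokes $h^{-1/2}<\eps^{-1/2}$ for $\eps\ll h$ in the $b_0$ bound, whereas your bookkeeping (charging the $\Gamma_N$ facet term entirely to the $h^{-1}$-weighted boundary part of $\norm{\cdot}_{1,h,\eps}$) makes that assumption unnecessary.
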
	
\begin{proof}
	Let $\w_h,\vv_h\in V_h$, $q_h\in Q_h$ be arbitrary. It holds
	\begin{align*}
\abs{a_h(\w_h,\vv_h)}\le &\norm{\w_h}_{L^2(\Omega)} \norm{\vv_h}_{L^2(\Omega)} + h^{-\frac{1}{2}}\norm{\vv_h\cdot\n}_{L^2(\Gamma_N)}h^{-\frac{1}{2}}\norm{\w_h\cdot\n}_{L^2(\Gamma_N)}\le \norm{\w_h}_{0,h} \norm{\vv_h}_{0,h},\\
\abs{a_\eps(\w_h,\vv_h)}\le& \norm{\w_h}_{L^2(\Omega)} \norm{\vv_h}_{L^2(\Omega)} + \eps^{-\frac{1}{2}}\norm{\vv_h\cdot\n}_{L^2(\Gamma_N)}\eps^{-\frac{1}{2}}\norm{\w_h\cdot\n}_{L^2(\Gamma_N)}\le \norm{\w_h}_{0,h,\eps} \norm{\vv_h}_{0,h,\eps}.
	\end{align*}
By integration by parts, we get
	\begin{align*}
		b_1(\vv_h,q_h) =& \left(q_h,\dive\vv_h\right)_\Omega - \langle q_h,\vv_h\cdot\n\rangle_{\Gamma_N} = \sum_{K\in\mathcal T_h} \left(q_h,\dive \vv_h\right)_K - \sum_{f\in\mathcal F_h^\partial(\Gamma_N)}\langle q_h,\vv_h\cdot\n\rangle_f\\
		=&-\sum_{K\in\mathcal T_h} \left(\nabla q_h,\vv_h\right)_K + \sum_{f\in\mathcal F_h^i}\langle \left[q_h \right],\vv_h\cdot\n \rangle_f+ \sum_{f\in\mathcal F_h^\partial(\Gamma_D)}\langle q_h ,\vv_h\cdot\n \rangle_f.
	\end{align*}
	Thus,
	\begin{align*}
		\abs{b_1(\vv_h,q_h)}\le& \sum_{K\in\mathcal T_h}\norm{\nabla q_h}_{L^2(K)}\norm{\vv_h}_{L^2(K)} + \sum_{f\in\mathcal F_h^i} h^{-\frac{1}{2}}\norm{\left[q_h\right]}_{L^2(f)}h^{\frac{1}{2}}\norm{\vv_h\cdot\n}_{L^2(f)} \\
		 &+ \sum_{f\in\mathcal F_h^\partial(\Gamma_D)} h^{-\frac{1}{2}}\norm{q_h}_{L^2(f)}h^{\frac{1}{2}}\norm{\vv_h\cdot\n}_{L^2(f)}.
	\end{align*}
We recall some standard inverse inequalities, namely,
\begin{equation}
	\begin{aligned}
		h^{\frac{1}{2}} \norm{\vv_h\cdot\n}_{L^2(f)} \lesssim \norm{\vv_h}_{L^2(K)}\qquad f\in\mathcal F_h^\partial(\Gamma_D), f\in\mathcal F_h^i, f\subset \partial K \label{eq:disc_inv_ineq}.
\end{aligned}
\end{equation}
In this way we obtain
\begin{align*}
\abs{b_1(\vv_h,q_h)}\lesssim 	& \norm{\vv_h}_{0,h} \norm{q_h}_{1,h}.
\end{align*}
 On the other hand,
	\begin{align*}
	b_0(\vv_h,q_h) =& -\sum_{K\in\mathcal T_h} \left(\nabla q_h,\vv_h\right)_K +\langle q_h,\vv_h\cdot\n\rangle_{\partial K} = 
	-\sum_{K\in\mathcal T_h} \left(\nabla q_h,\vv_h\right)_K+ \sum_{f\in\mathcal F_h^i} \langle \left[q_h\right],\vv_h\cdot\n\rangle_f\\
	&  + \sum_{f\in\mathcal F_h^\partial(\Gamma_N)} \langle q_h,\vv_h\cdot\n\rangle_f   + \sum_{f\in\mathcal F_h^\partial(\Gamma_D)} \langle q_h,\vv_h\cdot\n\rangle_f.
	\end{align*}
We have
	\begin{align*}
	\abs{b_0(\vv_h,q_h)}\le & \sum_{K\in\mathcal T_h} \norm{\nabla q_h}_{L^2(K)}\norm{\vv_h}_{L^2(K)}+\sum_{f\in\mathcal F_h^i} h^{-\frac{1}{2}}\norm{\left[q_h\right]}_{L^2(f)}h^{\frac{1}{2}}\norm{\vv_h\cdot\n}_{L^2(f)} \\
	&+\sum_{f\in\mathcal F_h^\partial} h^{-\frac{1}{2}}\norm{q_h}_{L^2(f)}h^{\frac{1}{2}}\norm{\vv_h\cdot\n}_{L^2(f)} \lesssim \norm{\vv_h}_{0,h,\eps}\norm{q_h}_{1,h,\eps},
	\end{align*}
having used again~\eqref{eq:disc_inv_ineq} and $h^{-\frac{1}{2}}<\eps^{-\frac{1}{2}}$ for $\eps \ll h$.  
\end{proof}
\begin{proposition}
	There exist ${\alpha_{a_h}},\alpha_{a_\eps}>0$ such that
	\begin{align*}
		a_{h}(\vv_h,\vv_h)\ge &\alpha_{a_h} \norm{\vv_h}^2_{0,h}\qquad\forall\ \vv_h\in V_h,\\
		a_{\eps}(\vv_h,\vv_h)\ge &\alpha_{a_\eps} \norm{\vv_h}^2_{0,h,\eps}\qquad\forall\ \vv_h\in V_h.
	\end{align*}
	\begin{proof}
		Let us take $\vv_h\in V_h$ arbitrary and compute
		\begin{align*}
			a_h(\vv_h,\vv_h) = &\norm{\vv_h}^2_{L^2(\Omega)}+  h^{-1}\norm{\vv_h\cdot\n}^2_{L^2(\Gamma_N)} 
		= \norm{\vv_h}^2_{0,h}.
		\end{align*}
		 The other coercivity estimate follows anologously. Hence, $\alpha_{a_h}=\alpha_{a_\eps}=1$.
	\end{proof}
\end{proposition}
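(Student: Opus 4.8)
The plan is to observe that both coercivity bounds are in fact \emph{equalities}, so there is essentially nothing to estimate. For the first form I would simply set the two arguments equal in the definition of $a_h$, giving
\[
a_h(\vv_h,\vv_h)=\norm{\vv_h}^2_{L^2(\Omega)}+h^{-1}\langle\vv_h\cdot\n,\vv_h\cdot\n\rangle_{\Gamma_N}=\norm{\vv_h}^2_{L^2(\Omega)}+h^{-1}\norm{\vv_h\cdot\n}^2_{L^2(\Gamma_N)}.
\]
The only point worth spelling out is that the single boundary integral over $\Gamma_N$ decomposes into a sum over the boundary facets, $\norm{\vv_h\cdot\n}^2_{L^2(\Gamma_N)}=\sum_{f\in\mathcal F_h^\partial(\Gamma_N)}\norm{\vv_h\cdot\n}^2_{L^2(f)}$, since $\Gamma_N$ is covered exactly by these facets and they overlap only on sets of surface measure zero. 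Matching this term by term against the definition of $\norm{\cdot}_{0,h}$ yields $a_h(\vv_h,\vv_h)=\norm{\vv_h}^2_{0,h}$, so the bound holds with $\alpha_{a_h}=1$.

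The second estimate follows verbatim with $h^{-1}$ replaced by $\eps^{-1}$: expanding $a_\eps(\vv_h,\vv_h)$ reproduces exactly the definition of $\norm{\vv_h}^2_{0,h,\eps}$, whence $\alpha_{a_\eps}=1$.

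There is no genuine obstacle here, and this triviality is by design. As the preceding remark explains, the mesh-dependent norms $\norm{\cdot}_{0,h}$ and $\norm{\cdot}_{0,h,\eps}$ were chosen precisely so that the symmetric positive part of each formulation, $a_h$ (resp.\ $a_\eps$), coincides with the square of the corresponding velocity norm; the superpenalty weight $h^{-1}$ (resp.\ $\eps^{-1}$) is the same object appearing in the form and in the norm. The real work in establishing well-posedness is therefore not this coercivity of $a$ on $V_h$, but the inf-sup compatibility between $V_h$ and $Q_h$ measured in the unbalanced norms $\norm{\cdot}_{0,h}$ and $\norm{\cdot}_{1,h}$ (resp.\ their $\eps$-weighted counterparts); that is where the commuting-diagram property $\dive V_h=Q_h$ of Proposition~\ref{prop:cd} and the inverse inequalities~\eqref{eq:disc_inv_ineq} will be needed.
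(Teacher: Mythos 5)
Your proposal is correct and follows essentially the same route as the paper: expanding $a_h(\vv_h,\vv_h)$ (resp.\ $a_\eps(\vv_h,\vv_h)$) yields exactly $\norm{\vv_h}^2_{0,h}$ (resp.\ $\norm{\vv_h}^2_{0,h,\eps}$), so both coercivity constants equal $1$. The extra observation that the boundary integral over $\Gamma_N$ splits into the facet sum appearing in the norm definition is a worthwhile clarification that the paper leaves implicit, but it does not change the argument.
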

\begin{proposition}\label{prop:infsup_fitted}
	There exist $\beta_m>0$, $m\in\{0,1\}$, such that	
	\begin{align*}	
	\inf_{q_h\in Q_h}\sup_{\vv_h\in V_h} \frac{b_1(\vv_h,q_h)}{\norm{\vv_h}_{0,h}\norm{q_h}_{1,h}}	\ge &\beta_1,\\
	\inf_{q_h\in Q_h}\sup_{\vv_h\in V_h} \frac{b_0(\vv_h,q_h)}{\norm{\vv_h}_{0,h,\eps}\norm{q_h}_{1,h,\eps}}	\ge& \beta_0.
	\end{align*}	
\end{proposition}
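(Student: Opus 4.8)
The plan is to prove both estimates by the same explicit, Fortin-type construction: to each $q_h\in Q_h$ I associate a single $\vv_h\in V_h$ whose degrees of freedom are tuned so that $b_m(\vv_h,q_h)$ reproduces, term by term, the squared norm of $q_h$. The natural starting point is the integrated-by-parts form of the two bilinear forms already computed in the continuity proof, namely
\begin{align*}
b_1(\vv_h,q_h)=-\sum_{K\in\mathcal T_h}\left(\nabla q_h,\vv_h\right)_K+\sum_{f\in\mathcal F_h^i}\langle[q_h],\vv_h\cdot\n\rangle_f+\sum_{f\in\mathcal F_h^\partial(\Gamma_D)}\langle q_h,\vv_h\cdot\n\rangle_f,
\end{align*}
together with the analogous identity for $b_0$ carrying in addition the sum over $f\in\mathcal F_h^\partial(\Gamma_N)$. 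These expressions show exactly which pieces of information about $q_h$ a test velocity can see, and they match, one by one, the terms of $\norm{q_h}_{1,h}$ and $\norm{q_h}_{1,h,\eps}$.

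The construction itself exploits the Raviart-Thomas degrees of freedom. Since $\restr{\nabla q_h}{K}\in\Psi_k(K)$, I prescribe $\vv_h$ elementwise by: choosing the interior moments so that the $L^2(K)$-orthogonal projection of $\restr{\vv_h}{K}$ onto $\Psi_k(K)$ equals $-\nabla q_h$; choosing the normal moments on each $f\in\mathcal F_h^i$ so that $\vv_h\cdot\n_f=h^{-1}[q_h]_f$; and choosing the normal moments on each $f\in\mathcal F_h^\partial(\Gamma_D)$ so that $\vv_h\cdot\n=h^{-1}q_h$. For the first formulation I set $\vv_h\cdot\n=0$ on $\Gamma_N$; for the second I additionally impose $\vv_h\cdot\n=h^{-1}q_h$ on every $f\in\mathcal F_h^\partial(\Gamma_N)$, precisely to activate the extra boundary contribution present in $\norm{\cdot}_{1,h,\eps}$. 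With these values $-(\nabla q_h,\vv_h)_K=\norm{\nabla q_h}_{L^2(K)}^2$ because $\nabla q_h\in\Psi_k(K)$, and each boundary or jump pairing collapses to $h^{-1}\norm{[q_h]}_{L^2(f)}^2$ or $h^{-1}\norm{q_h}_{L^2(f)}^2$. Summing yields the clean identities $b_1(\vv_h,q_h)=\norm{q_h}_{1,h}^2$ and $b_0(\vv_h,q_h)=\norm{q_h}_{1,h,\eps}^2$.

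The remaining, and technically central, step is to bound the velocity norm by the pressure norm. The key estimate, obtained by mapping to the reference element with the Piola transform, invoking equivalence of norms on the finite-dimensional space $\mathbb{RT}_k(\hat K)$, and tracking the powers of $h_K$ produced by the (shape-regular, quasi-uniform) affine maps, is the local bound
\begin{align*}
\norm{\vv_h}_{L^2(K)}^2\lesssim\norm{P_{\Psi_k}\vv_h}_{L^2(K)}^2+\sum_{f\subset\partial K}h_K\norm{\vv_h\cdot\n}_{L^2(f)}^2,
\end{align*}
$P_{\Psi_k}$ denoting the $L^2(K)$-projection onto $\Psi_k(K)$. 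Inserting the prescribed values $P_{\Psi_k}\vv_h=-\nabla q_h$ and $\vv_h\cdot\n\in\{h^{-1}[q_h],h^{-1}q_h,0\}$ and summing over $K\in\mathcal T_h$ turns the right-hand side into $\norm{q_h}_{1,h}^2$ for the first formulation; since the $\Gamma_N$ trace of $\vv_h$ vanishes there, one even has $\norm{\vv_h}_{0,h}^2\lesssim\norm{q_h}_{1,h}^2$, whence $b_1(\vv_h,q_h)=\norm{q_h}_{1,h}^2\ge\beta_1\norm{\vv_h}_{0,h}\norm{q_h}_{1,h}$ with $\beta_1=1/C$.

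I expect the genuine obstacle to lie in the second estimate, where $\norm{\cdot}_{0,h,\eps}$ weights the $\Gamma_N$ normal trace by $\eps^{-1}$ while $\norm{\cdot}_{1,h,\eps}$ weights the corresponding pressure boundary term only by $h^{-1}$. With the choice $\vv_h\cdot\n=h^{-1}q_h$ on $\Gamma_N$ the penalty contribution to the velocity norm is of order $\eps^{-1}h^{-2}\norm{q_h}_{L^2(f)}^2$, which is \emph{not} controlled by the $h^{-1}\norm{q_h}_{L^2(f)}^2$ sitting in $\norm{q_h}_{1,h,\eps}^2$ as soon as $\eps\ll h$. Reconciling these two weightings is the crux: either one rescales the prescribed $\Gamma_N$ flux so as to keep the $\eps^{-1}$-weighted term under control (at the price of recovering only a fraction of the boundary term, hence an inf-sup constant that may deteriorate with $\eps$ and $h$), or one shows that the $\Gamma_N$ pressure contribution can be absorbed into the broken-gradient and jump terms through a discrete trace and Poincar\'e inequality adapted to $Q_h$. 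Settling this point is exactly what distinguishes the penalty formulation from the Nitsche one, and is where I would concentrate the effort.
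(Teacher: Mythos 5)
Your construction is the same as the paper's: for each $q_h$ you build $\vv_h$ through the Raviart--Thomas degrees of freedom, prescribing the interior moments so that the $\Psi_k(K)$-projection of $\vv_h$ equals $-\nabla q_h$ and the normal moments so that $\vv_h\cdot\n$ reproduces $h^{-1}[q_h]$ on interior facets and $h^{-1}q_h$ on $\Gamma_D$ (and, for $m=0$, on $\Gamma_N$ as well, with $\vv_h\cdot\n=0$ on $\Gamma_N$ for $m=1$). The identities $b_1(\vv_h,q_h)=\norm{q_h}_{1,h}^2$ and $b_0(\vv_h,q_h)=\norm{q_h}_{1,h,\eps}^2$ and the scaling argument
\begin{align*}
\norm{\vv_h}_{L^2(K)}^2\lesssim\norm{\pi_{K,k}\vv_h}_{L^2(K)}^2+\sum_{f\subset\partial K}h_K\norm{\vv_h\cdot\n}_{L^2(f)}^2
\end{align*}
are exactly what the paper uses, so the $m=1$ estimate is complete and matches.

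For $m=0$ you stop at the bound $\norm{\vv_h}_{0,h,\eps}\lesssim\norm{q_h}_{1,h,\eps}$, and the obstruction you identify is genuine: with $\vv_h\cdot\n=h^{-1}q_h$ on $\Gamma_N$ the velocity norm picks up $\eps^{-1}h^{-2}\norm{q_h}_{L^2(f)}^2$, which the $h^{-1}\norm{q_h}_{L^2(f)}^2$ term of $\norm{q_h}_{1,h,\eps}^2$ does not control once $\eps\ll h$, which is precisely the regime $\eps=h^{k+1}$, $k\ge 1$, of interest. Be aware, however, that the paper does not carry out this step either: it makes the identical choice of $\vv_h$ and simply refers to K\"onn\"o--Stenberg for the inequality $\norm{\vv_h}_{0,h,\eps}\le C\norm{q_h}_{1,h,\eps}$. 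So your proposal is not weaker than the printed argument; it just makes explicit the uniformity-in-$\eps$ issue that the citation is meant to cover. To close your write-up you should either invoke that reference as the paper does, or implement one of the two repairs you sketch (rescaling the $\Gamma_N$ flux and absorbing the residual boundary term, or showing the $\Gamma_N$ contribution of $\norm{\cdot}_{1,h,\eps}$ is dominated by the remaining terms via discrete trace/Poincar\'e inequalities), keeping track of whether the resulting $\beta_0$ is uniform in $\eps$ and $h$ --- uniformity is needed for the error analysis of the penalty scheme that follows.
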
	
\begin{proof}
	We start with $m=1$. Let us fix $q_h\in Q_h$ arbitrary. We construct $\vv_h$ by using the dofs of the Raviart-Thomas space.
	\begin{align}
		\langle \vv_h\cdot\n,\varphi_h \rangle_f =  h^{-1} \langle \left[q_h\right],\varphi_h\rangle_f\qquad&\forall\ f\in\mathcal F_h^i,\ \varphi_h\in\Psi_k(f),\label{stab:eq1}\\
		\langle \vv_h\cdot\n,\varphi_h\rangle_f =0\qquad&\forall\ f\in\mathcal F_h^{\partial}(\Gamma_N),\ \varphi_h\in\Psi_k(f), \label{stab:eq2}\\
				\langle \vv_h\cdot\n,\varphi_h\rangle_f =h^{-1} \langle q_h,\varphi_h\rangle_f\qquad&\forall\ f\in\mathcal F_h^{\partial}(\Gamma_D),\ \varphi_h\in\Psi_k(f), \label{stab:eq4}\\
		\left(\vv_h,\bm\psi_h \right)_K = -\left(\nabla q_h,\bm\psi_h \right)_K\qquad&\forall\ K\in\mathcal T_h,\ \bm\psi_h\in\Psi_k(K),\ \text{if}\ k>0. \label{stab:eq3}
	\end{align}	
	By using the definition of $\vv_h$,
	\begin{align*}
		b_1(\vv_h,q_h) =&  \left(q_h,\dive\vv_h \right)_{\Omega} - \langle q_h,\vv_h\cdot\n\rangle_{\Gamma_N} = \sum_{K\in\mathcal T_h} \left(q_h,\dive\vv_h\right)_K - \sum_{f\in\mathcal F_h^{\partial}(\Gamma_N)} \langle q_h,\vv_h\cdot\n\rangle_f \\
		=&-\sum_{K\in\mathcal T_h}\left( \nabla q_h,\vv_h \right)_K  + \langle q_h, \vv_h\cdot\n\rangle_{\partial K} - \sum_{f\in\mathcal F_h^{\partial}(\Gamma_N)} \langle q_h,\vv_h\cdot\n\rangle_f \\
		=& -\sum_{K\in\mathcal T_h}\left( \nabla q_h,\vv_h \right)_K +\sum_{f\in\mathcal F_h^i} \langle \left[q_h\right],\vv_h\cdot\n\rangle_f +\sum_{f\in\mathcal F_h^{\partial}(\Gamma_D)} \langle q_h,\vv_h\cdot\n\rangle_f\\
		=& \sum_{K\in\mathcal T_h} \norm{\nabla q_h}^2_{L^2(K)} + \sum_{f\in\mathcal F_h^i}h^{-1} \norm{\left[q_h\right]}^2_{L^2(f)} + \sum_{f\in\mathcal F_h^\partial(\Gamma_D)}h^{-1}\norm{q_h}^2_{L^2(f)}  = \norm{q_h}^2_{1,h}.
	\end{align*}
		Finally, let us show that $\norm{\vv_h}_{0,h}\le C \norm{q_h}_{1,h}$. Note that for every $f\in\mathcal F_h^{\partial}(\Gamma_N)$, since $\restr{\vv_h\cdot\n}{f} \in\mathbb P_k(f)$, \eqref{stab:eq2} implies
	\begin{align*}
	\norm{\vv_h\cdot\n }^2_{L^2(f)} = \langle \vv_h\cdot\n,\vv_h\cdot\n \rangle_{f} =0 \qquad\Rightarrow\qquad \norm{\vv_h\cdot\n}_{L^2(f)}=0.
	\end{align*}
	Then, let us show $\norm{\vv_h}_{L^2(\Omega)} \le C \norm{ q_h}_{1,h}$. From~\eqref{stab:eq1} it holds $\restr{\vv_h\cdot\n}{f}=h_K^{-1} \restr{\pi_{f,k}\left[q_h\right] }{f}$ for every $f\in\mathcal{F}_h^i$ and from~\eqref{stab:eq3} we have $\restr{\pi_{K,k}\vv_h}{K} =- \restr{\pi_{K,k}\nabla q_h}{K}$ for every $K\in\mathcal T_h$. Note that here $\pi_{K,k}$ denotes the $L^2$-orthogonal projection onto $\Psi_k(K)$. Similarly, $\pi_{f,k}$ is the $L^2$-projection onto $\Psi_k(f)$. From finite dimensionality it holds $\norm{\hat \vv_h}^2_{L^2(\hat K)} \lesssim \norm{\pi_{\hat K,k}\hat \vv_h}^2_{L^2(\hat K)} + \norm{\hat \vv_h\cdot\hat \n}^2_{L^2(\hat f)}$.  Hence, $\norm{\vv_h}^2_{L^2(K)}\lesssim \norm{\nabla q_h}^2_{L^2(K)}+h_K^{-1}\norm{[q_h]}^2_{L^2(f)}$, $f$ being a facet of $K$, which follows by a standard scaling argument (see Proposition 2.1 of~\cite{doi:10.1137/17M1163335}) and  by construction of $\vv_h$.
	
	Let us now take $m=0$ and $q_h\in Q_h$. We define $\vv_h$ as follows:
		\begin{align}
	\langle \vv_h\cdot\n,\varphi_h \rangle_f = h^{-1} \langle \left[q_h\right],\varphi_h\rangle_f\qquad&\forall\ f\in\mathcal F_h^i,\ \varphi_h\in\Psi_k(f),\label{stab:eq1bis}\\
	\langle \vv_h\cdot\n,\varphi_h\rangle_f = h^{-1}\langle q_h,\varphi_h\rangle_f \qquad&\forall\ f\in\mathcal F_h^{\partial}(\Gamma_N),\ \varphi_h\in \Psi_k(f),\label{stab:eq2bis}\\
		\langle \vv_h\cdot\n,\varphi_h\rangle_f = h^{-1}\langle q_h,\varphi_h\rangle_f  \qquad&\forall\ f\in\mathcal F_h^{\partial}(\Gamma_D),\ \varphi_h\in\Psi_k(f),\label{stab:eq4bis}\\
	\left(\vv_h,\bm\psi_h \right)_K = -\left(\nabla q_h,\bm\psi_h \right)_K\qquad&\forall\ K\in\mathcal T_h,\ \bm\psi_h\in\Psi_k(K),\ \text{if}\ k>0\label{stab:eq3bis}. 
	\end{align}	
	\begin{align*}
		b_0(\vv_h,q_h) =&  \left(q_h,\dive\vv_h \right)_{\Omega} = \sum_{K\in\mathcal T_h} \left(q_h,\dive\vv_h\right)_K=-\sum_{K\in\mathcal T_h}\left( \nabla q_h,\vv_h \right)_K  +  \langle q_h, \vv_h\cdot\n\rangle_{\partial K}  \\
		=& -\sum_{K\in\mathcal T_h}\left( \nabla q_h,\vv_h \right)_K +\sum_{f\in\mathcal F_h^i} \langle \left[q_h\right],\vv_h\cdot\n\rangle_f +\sum_{f\in\mathcal F_h^\partial(\Gamma_N)} \langle q_h,\vv_h\cdot\n\rangle_f +\sum_{f\in\mathcal F_h^\partial(\Gamma_D)} \langle q_h,\vv_h\cdot\n\rangle_f\\
		=& \sum_{K\in\mathcal T_h} \norm{\nabla q_h}^2_{L^2(K)}+ \sum_{f\in\mathcal F_h^i} h^{-1}\norm{\left[q_h\right]}^2_{L^2(f)}+\sum_{f\in\mathcal F_h^\partial}h^{-1}\norm{q_h}^2_{L^2(f)}=\norm{q_h}^2_{1,h,\eps}.
	\end{align*}
	We refer to~\cite{konno_robin} for the inequality $\norm{\vv_h}_{0,h,\eps}\le C \norm{q_h}_{1,h,\eps}$. 
\end{proof}

%---------------------------------------------------------------------------------------------------------------
\section{A priori error estimates}
In this section we will prove a priori error estimates for the formulations~\eqref{eq:prob_disc} and~\eqref{eq:prob_disc_bis}.
\\
We observe that all the constants appearing throughout this section and concerning the error bounds for the formulation~\eqref{eq:prob_disc_bis} are independent of the parameter $\eps$. This is due to the orthogonality properties of the interpolants along the boundary.
  \begin{lemma}\label{apriori:lemma1_m=1}
  	Let $\left(\bm\u,p\right)$ be the solution of the continuous problem~\eqref{eq:prob_cont} and $\left(\bm\u_h,p_h\right)\in V_h\times Q_h$ the one of the discrete problem~\eqref{eq:prob_disc} with $m=1$. Then
  	\begin{equation}
  	\norm{\bm\u_h - r_h\bm\u}_{0,h}+\norm{p_h-\pi_hp}_{1,h}\lesssim \norm{\bm\u-r_h\bm\u}_{L^2(\Omega)}+ h^{\frac{1}{2}}\sum_{f\in\mathcal F_h^\partial(\Gamma_N)}\norm{p-\pi_h p}_{L^2(f)}.
  	\end{equation}
  \end{lemma}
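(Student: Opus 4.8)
The plan is to carry out the usual Brezzi-type error analysis for the saddle-point problem~\eqref{eq:prob_disc}, taking advantage of the fact that $a_h$ is coercive on the whole of $V_h$ (so no separate kernel argument is needed) together with the inf-sup condition of Proposition~\ref{prop:infsup_fitted}. I would introduce the discrete errors $\bm e_h := \bm\u_h - r_h\bm\u \in V_h$ and $\eta_h := p_h - \pi_h p \in Q_h$; the goal is then to bound $\norm{\bm e_h}_{0,h} + \norm{\eta_h}_{1,h}$ by the two interpolation-error terms appearing on the right-hand side.

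First I would derive Galerkin orthogonality. Since formulation~\eqref{eq:prob_disc} is consistent, the exact pair $(\bm\u,p)$ satisfies the discrete equations against every test function in $V_h\times Q_h$; subtracting the discrete problem and inserting the interpolants yields
\begin{align*}
a_h(\bm e_h,\vv_h) + b_1(\vv_h,\eta_h) &= a_h(\bm\u - r_h\bm\u,\vv_h) + b_1(\vv_h, p - \pi_h p) \qquad\forall\ \vv_h\in V_h,\\
b_1(\bm e_h,q_h) &= b_1(\bm\u - r_h\bm\u, q_h) \qquad\forall\ q_h\in Q_h.
\end{align*}

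The heart of the proof, and the step I expect to require the most care, is to show that the two right-hand sides collapse to exactly the quantities in the claim, using the orthogonality built into the interpolants. For $b_1(\bm\u - r_h\bm\u, q_h)$, the volume contribution $(q_h,\dive(\bm\u - r_h\bm\u))_\Omega$ vanishes because the commuting diagram of Proposition~\ref{prop:cd} gives $\dive r_h\bm\u = \pi_h\dive\bm\u$ while $q_h\in Q_h$, and the boundary contribution vanishes because the facet degrees of freedom of $r_h$ force $\langle(\bm\u - r_h\bm\u)\cdot\n,\varphi_h\rangle_f = 0$ for all $\varphi_h\in\Psi_k(f)$, with $q_h|_f\in\Psi_k(f)$; hence $b_1(\bm e_h,q_h)=0$, i.e. $\bm e_h$ lies in the discrete kernel of $b_1$. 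The same boundary orthogonality kills the penalty term in $a_h(\bm\u - r_h\bm\u,\vv_h)$, leaving only $(\bm\u - r_h\bm\u,\vv_h)_\Omega \le \norm{\bm\u - r_h\bm\u}_{L^2(\Omega)}\norm{\vv_h}_{0,h}$, which is precisely why merely the $L^2$-interpolation error of the velocity survives. Finally, in $b_1(\vv_h,p-\pi_h p)$ the volume term $(p-\pi_h p,\dive\vv_h)_\Omega$ vanishes since $\dive\vv_h\in Q_h$ by Proposition~\ref{prop:cd} and $p-\pi_h p$ is $L^2$-orthogonal to $Q_h$, so only the boundary term remains, which I bound by Cauchy--Schwarz, the scaling $h^{-\frac{1}{2}}\norm{\vv_h\cdot\n}_{L^2(f)}\le\norm{\vv_h}_{0,h}$, and $\ell^2\le\ell^1$ over the facets, obtaining $h^{\frac{1}{2}}\big(\sum_{f\in\mathcal F_h^\partial(\Gamma_N)}\norm{p-\pi_h p}_{L^2(f)}\big)\norm{\vv_h}_{0,h}$.

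To conclude I would test the first error identity with $\vv_h = \bm e_h$: the mixed term $b_1(\bm e_h,\eta_h)$ drops since $\bm e_h$ is in the discrete kernel, coercivity gives $a_h(\bm e_h,\bm e_h)=\norm{\bm e_h}_{0,h}^2$, and the bounds above yield $\norm{\bm e_h}_{0,h}\lesssim \norm{\bm\u - r_h\bm\u}_{L^2(\Omega)} + h^{\frac{1}{2}}\sum_{f\in\mathcal F_h^\partial(\Gamma_N)}\norm{p-\pi_h p}_{L^2(f)}$. Then I would control $\norm{\eta_h}_{1,h}$ through the inf-sup condition: choosing a suitable $\vv_h$ so that $\beta_1\norm{\eta_h}_{1,h}\le b_1(\vv_h,\eta_h)/\norm{\vv_h}_{0,h}$, and substituting $b_1(\vv_h,\eta_h)$ from the first identity together with the continuity of $a_h$ and the already-established bound on $\norm{\bm e_h}_{0,h}$, gives the same right-hand side. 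Adding the two estimates delivers the claim.
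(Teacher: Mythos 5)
Your proof is correct and follows essentially the same route as the paper: the splitting into interpolation and discrete errors, the Galerkin orthogonality from consistency, and the identification of the only two surviving terms $\left(\u-r_h\u,\vv_h\right)_\Omega$ and $-\langle p-\pi_h p,\vv_h\cdot\n\rangle_{\Gamma_N}$ via the facet degrees of freedom of $r_h$, the commuting diagram, and the $L^2$-orthogonality of $\pi_h$ are exactly the paper's argument. The only immaterial difference is the final assembly: you run the classical two-step Brezzi argument (coercivity of $a_h$ tested with the velocity error, then the inf-sup of $b_1$ for the pressure), whereas the paper invokes the equivalent global inf-sup bound~\eqref{eq:global_inv_m=1} for the combined form $\mathcal A_h$ applied to the error pair.
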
	
  \begin{proof}
The stability estimates previously shown for $a_h(\cdot,\cdot)$ and $b_1(\cdot,\cdot)$ with respect to $\norm{\cdot}_{0,h}$ and $\norm{\cdot}_{1,h}$ imply
  \begin{equation}\label{eq:global_inv_m=1}
  \vertiii{\bm\eta_h,s_h}_h \lesssim \sup_{\left(\vv_h,q_h\right)} \frac{\mathcal A_h \left( \left(\bm\eta_h,s_h\right),\left(\bm\vv_h,q_h\right)\right)}{\vertiii{\bm\vv_h,q_h}_h}\qquad\forall\ \left(\bm\eta_h,s_h \right)\in V_h\times Q_h,
  \end{equation}
where
\begin{align*}
\mathcal A_h \left( \left(\bm\eta_h,s_h\right),\left(\vv_h,q_h\right)\right) :=& a_h(\bm\eta_h,\vv_h) + b_1(\vv_h,s_h) + b_1(\bm\eta_h,q_h),\\
\vertiii{\bm\eta_h,s_h}_h^2:=& \norm{\bm\eta_h}^2_{0,h} + \norm{s_h}^2_{1,h}.
\end{align*}
Using~\eqref{eq:global_inv_m=1}, for $(\bm\u_h-r_h\bm\u,p_h-\pi_hp)$ there exists $\left(\vv_h,q_h\right)\in V_h\times Q_h$ such that
\begin{align*}
\norm{\bm\u_h-r_h\bm\u_h}_{0,h}+\norm{p_h-\pi_h p}_{1,h} \le& \sqrt{d} \vertiii{\bm\u_h-r_h\bm\u,p_h-\pi_h p}_h\lesssim 
\frac{\mathcal A_h\left( \left(\bm\u_h-r_h\bm\u,p_h-\pi_h p\right),\left(\vv_h,q_h\right)\right)}{\vertiii{\vv_h,q_h}_h}.
\end{align*}
Hence, we have
\begin{equation}\label{eq1:apriori_m=1}
\begin{aligned}
\mathcal A_h \left( \left(\bm\u_h-r_h\bm\u,p_h-\pi_h p\right),\left(\vv_h,q_h\right)\right) =& \left(\u_h-\u,\vv_h\right)_{L^2(\Omega)}+\left(\u-r_h\u,\vv_h\right)_{L^2(\Omega)}+h^{-1}\langle \left(\u_h-\u\right)\cdot\n,\vv_h\cdot\n\rangle_{\Gamma_N}\\
&+h^{-1}\langle\left(\u-r_h\u\right)\cdot\n,\vv_h\cdot\n\rangle_{\Gamma_N}+b_0(\vv_h,p_h-p)+b_0(\vv_h,p-\pi_hp)\\
&-\langle p_h-p,\vv_h\cdot\n \rangle_{\Gamma_N} - \langle p-\pi_h p,\vv_h\cdot\n \rangle_{\Gamma_N}\\
 &+b_1(\u_h-\u,q_h)+b_1(\u-r_h\u,q_h).
\end{aligned}
\end{equation}	
By construction of $r_h$ and $\pi_h$ we have, respectively,
\begin{equation}\label{eq:orthogonaly_relations}
\begin{aligned}
h^{-1}\langle (\u-r_h\u)\cdot\n,\vv_h\cdot\n \rangle_{\Gamma_N}=0\qquad&\forall\ \vv_h\in V_h,\\
b_1(\u-r_h\u,q_h) = -\sum_{K\in\mathcal T_h} \left(\nabla q_h,\u-r_h\u\right)_K + \sum_{f\in\mathcal F_h^i}\langle [q_h],\left(\u-r_h\u\right)\cdot\n \rangle_f=0\qquad&\forall\ q_h\in Q_h, \\
b_0(\vv_h,p-\pi_h p)=0\qquad&\forall\ \vv_h\in V_h.
\end{aligned}
\end{equation}
By consistency, we have
\begin{align*}
\left(\u_h-\u,\vv_h\right)_{L^2(\Omega)}+h^{-1}\langle \left(\u_h-\u\right)\cdot\n,\vv_h\cdot\n\rangle_{\Gamma_N}+b_0(\vv_h,p_h-p)+\langle p_h-p,\vv_h\cdot\n \rangle_{\Gamma_N}=0\qquad&\forall\ \vv_h\in V_h,\\
b_1(\u_h-\u,q_h) = 0\qquad &\forall\ q_h\in Q_h.
\end{align*}
Hence, in~\eqref{eq1:apriori_m=1} we are left with
\begin{align*}
\mathcal A_h \left( \left(\bm\u_h-r_h\bm\u,p_h-\pi_h p\right),\left(\vv_h,q_h\right)\right) =\left(\u-r_h\u,\vv_h\right)_{L^2(\Omega)}
- \langle p-\pi_h p,\vv_h\cdot\n \rangle_{\Gamma_N} .
\end{align*}
We have
\begin{align*}
\left(\u-r_h\u,\vv_h\right)_{L^2(\Omega)}
- \langle p-\pi_h p,\vv_h\cdot\n \rangle_{\Gamma_N} \le & \norm{\u-r_h\u}_{L^2(\Omega)}\norm{\vv_h}_{L^2(\Omega)}\\
& + \sum_{f\in\mathcal F_h^\partial(\Gamma_N)}h^{\frac{1}{2}}\norm{\left(p-\pi_h p\right)}_{L^2(f)}h^{-\frac{1}{2}}\norm{\vv_h\cdot\n}_{L^2(\Gamma_N)},
\end{align*}
and we can write
\begin{align*}
\norm{\u_h - r_h\u}_{0,h}+\norm{p_h-\pi_h p}_{1,h}\lesssim & \frac{\left(\norm{\u-r_h\u}_{L^2(\Omega)}+h^{\frac{1}{2}}\sum_{f\in\mathcal F_h^\partial(\Gamma_N)}\norm{p-\pi_hp}_{L^2(f)} \right)\vertiii{\vv_h,0}_h}{\vertiii{\vv_h,q_h}_h} \\
\lesssim& \norm{\u-r_h\u}_{L^2(\Omega)}+h^{\frac{1}{2}}\sum_{f\in\mathcal F_h^\partial(\Gamma_N)}\norm{p-\pi_hp}_{L^2(f)}.
\end{align*}
\end{proof}
 \begin{lemma}\label{apriori:lemma1_pert}
	Let $\left(\bm\u^\eps,p^\eps\right)$ be the solution of the perturbed continuous problem~\eqref{eq:prob_cont_bis} and $\left(\bm\u_h,p_h\right)\in V_h\times Q_h$ the one of the discrete problem~\eqref{eq:prob_disc_bis}. Then
	\begin{equation}
	\norm{\bm\u_h - r_h\bm\u^\eps}_{0,h,\eps}+\norm{p_h-\pi_hp^\eps}_{1,h,\eps}\lesssim \norm{\bm\u^\eps-r_h\bm\u^\eps}_{L^2(\Omega)}.
	\end{equation}
\end{lemma}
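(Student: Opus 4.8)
The plan is to reproduce the Strang-type argument of Lemma~\ref{apriori:lemma1_m=1}, exploiting the fact that for the second formulation the form $b_0$ carries no boundary contribution and that the interpolation orthogonalities annihilate every $\eps^{-1}$-weighted term. As a consequence the final bound is $\eps$-independent and, contrary to the $m=1$ case, involves no boundary pressure residual on $\Gamma_N$, which is exactly why the right-hand side here reduces to the single quantity $\norm{\bm\u^\eps-r_h\bm\u^\eps}_{L^2(\Omega)}$.

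First I would assemble a global inf-sup estimate for the full saddle-point form. Setting
\begin{align*}
\mathcal A_\eps\big((\bm\eta_h,s_h),(\vv_h,q_h)\big):=a_\eps(\bm\eta_h,\vv_h)+b_0(\vv_h,s_h)+b_0(\bm\eta_h,q_h),\qquad
\vertiii{\bm\eta_h,s_h}_{h,\eps}^2:=\norm{\bm\eta_h}_{0,h,\eps}^2+\norm{s_h}_{1,h,\eps}^2,
\end{align*}
the continuity and coercivity of $a_\eps$ (with $\alpha_{a_\eps}=1$) together with the continuity and the inf-sup condition for $b_0$ from Proposition~\ref{prop:infsup_fitted} (the latter via~\cite{konno_robin}), all with constants independent of $\eps$, combine through the standard theory for discrete saddle-point problems into
\begin{align*}
\vertiii{\bm\eta_h,s_h}_{h,\eps}\lesssim\sup_{(\vv_h,q_h)\in V_h\times Q_h}\frac{\mathcal A_\eps\big((\bm\eta_h,s_h),(\vv_h,q_h)\big)}{\vertiii{\vv_h,q_h}_{h,\eps}},
\end{align*}
the implied constant being $\eps$-free since $a_\eps$ is coercive on all of $V_h$.

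Next I would apply this to $(\bm\u_h-r_h\bm\u^\eps,\,p_h-\pi_h p^\eps)$, obtaining a test pair $(\vv_h,q_h)$, and expand $\mathcal A_\eps$ after inserting $\pm\bm\u^\eps$ and $\pm p^\eps$. The contributions involving the genuine error $(\bm\u_h-\bm\u^\eps,\,p_h-p^\eps)$ vanish by Galerkin orthogonality, i.e.\ by the consistency of~\eqref{eq:prob_disc_bis} established in the preceding lemma, leaving only the interpolation residual $a_\eps(\bm\u^\eps-r_h\bm\u^\eps,\vv_h)+b_0(\vv_h,p^\eps-\pi_h p^\eps)+b_0(\bm\u^\eps-r_h\bm\u^\eps,q_h)$. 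I would then discard its three parts in turn: the normal-trace preservation of $r_h$ gives $\eps^{-1}\langle(\bm\u^\eps-r_h\bm\u^\eps)\cdot\n,\vv_h\cdot\n\rangle_{\Gamma_N}=0$ because $\restr{\vv_h\cdot\n}{f}\in\Psi_k(f)$, so that $a_\eps(\bm\u^\eps-r_h\bm\u^\eps,\vv_h)=(\bm\u^\eps-r_h\bm\u^\eps,\vv_h)_\Omega$; the commuting diagram of Proposition~\ref{prop:cd} and the $L^2$-optimality of $\pi_h$ give $b_0(\bm\u^\eps-r_h\bm\u^\eps,q_h)=(q_h,\dive\bm\u^\eps-\pi_h\dive\bm\u^\eps)_\Omega=0$; and $\dive V_h=Q_h$ gives $b_0(\vv_h,p^\eps-\pi_h p^\eps)=(p^\eps-\pi_h p^\eps,\dive\vv_h)_\Omega=0$.

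Only $(\bm\u^\eps-r_h\bm\u^\eps,\vv_h)_\Omega$ then survives; estimating it by Cauchy--Schwarz together with $\norm{\vv_h}_{L^2(\Omega)}\le\vertiii{\vv_h,q_h}_{h,\eps}$, and dividing by $\vertiii{\vv_h,q_h}_{h,\eps}$, yields the assertion. The only genuinely new point relative to the $m=1$ case is the $\eps$-uniformity: it rests entirely on the $\eps$-free global inf-sup constant and on the boundary orthogonality of $r_h$, which cancels every $\eps^{-1}$ term \emph{before} any estimate is taken. I therefore expect the delicate ingredient to be the $\eps$-uniform inf-sup stability of $b_0$ underlying Proposition~\ref{prop:infsup_fitted}, which I would simply invoke, the remaining manipulations being routine rearrangements of the residual.
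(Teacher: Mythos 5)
Your proposal is correct and follows essentially the same route as the paper: the $\eps$-uniform global inf-sup bound for $\mathcal A_\eps$, insertion of $\pm\bm\u^\eps$, $\pm p^\eps$, cancellation of the true-error terms by consistency and of the $\eps^{-1}$-weighted and divergence terms by the interpolation orthogonalities, leaving only $(\bm\u^\eps-r_h\bm\u^\eps,\vv_h)_\Omega$. The only cosmetic difference is that you justify $b_0(\bm\u^\eps-r_h\bm\u^\eps,q_h)=0$ via the commuting diagram of Proposition~\ref{prop:cd}, whereas the paper integrates by parts and uses the degree-of-freedom orthogonalities of $r_h$ directly; the two arguments are equivalent.
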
	
\begin{proof}
The stability estimates previously shown for $a_\eps(\cdot,\cdot)$ and $b_0(\cdot,\cdot)$ with respect to $\norm{\cdot}_{0,h,\eps}$ and $\norm{\cdot}_{1,h,\eps}$ imply
	\begin{equation}\label{apriori:eq1_pert}
\vertiii{\bm\eta,s_h}_{h,\eps} \lesssim \sup_{\left(\vv_h,q_h\right)} \frac{\mathcal A_\eps \left( \left(\bm\eta_h,s_h\right),\left(\vv_h,q_h\right)\right)}{\vertiii{\vv_h,q_h}_{h,\eps}}\qquad\forall\ \left(\bm\eta_h,s_h \right)\in V_h\times Q_h,
\end{equation}
where
\begin{align*}
\mathcal A_\eps \left( \left(\bm\eta_h,s_h\right),\left(\vv_h,q_h\right)\right) :=& a_\eps(\bm\eta_h,\vv_h) + b_0(\vv_h,s_h) + b_0(\bm\eta_h,q_h),\\
\vertiii{\bm\eta_h,s_h}^2:=& \norm{\bm\eta_h}^2_{0,h,\eps} + \norm{s_h}^2_{1,h,\eps}.
\end{align*}
Hence, for $(\bm\u_h-r_h\bm\u^\eps,p_h-\pi_hp^\eps)$ there exists $\left(\vv_h,q_h\right)\in V_h\times Q_h$ such that
\begin{align*}
\norm{\bm\u_h-r_h\bm\u^\eps}_{0,h,\eps}+\norm{p_h-\pi_h p^\eps}_{1,h,\eps} \le& \sqrt{d} \vertiii{\bm\u_h-r_h\bm\u^\eps,p_h-\pi_h p^\eps}_{h,\eps}\lesssim 
\frac{\mathcal A_\eps \left( \left(\bm\u_h-r_h\bm\u^\eps,p_h-\pi_h p^\eps\right),\left(\vv_h,q_h\right)\right)}{\vertiii{\vv_h,q_h}_{h,\eps}}.
\end{align*}
Hence, we have
\begin{align*}
\mathcal A_{\eps} \left( \left(\bm\u_h-r_h\bm\u^\eps,p_h-\pi_h p^\eps\right),\left(\vv_h,q_h\right)\right) =& \left(\u_h-\u^\eps,\vv_h\right)_{L^2(\Omega)}+\left(\u^\eps-r_h\u^\eps,\vv_h\right)_{L^2(\Omega)}+\eps^{-1}\langle \left(\u_h-\u^\eps\right)\cdot\n,\vv_h\cdot\n\rangle_{\Gamma_N}\\
&+\eps^{-1}\langle\left(\u^\eps-r_h\u^\eps\right)\cdot\n,\vv_h\cdot\n\rangle_{\Gamma_N}
+b_0(\vv_h,p_h-p^\eps)\\
&+b_0(\vv_h,p^\eps-\pi_hp^\eps) +b_0(\u_h-\u^\eps,q_h)+b_0(\u^\eps-r_h\u^\eps,q_h).
\end{align*}
The following orthogonality relations hold by definition of $r_h$ and $\pi_h$:
\begin{alignat*}{3}
\eps^{-1}\langle\left(\u^\eps-r_h\u^\eps\right)\cdot\n,\vv_h\cdot\n\rangle_{\Gamma_N} = &0\qquad&&\forall\ \vv_h\in V_h,\\
b_0(\vv_h,p^\eps-\pi_h p^\eps)=&0 \qquad&&\forall\ \vv_h\in V_h,\\
b_0(\u^\eps-r_h\u^\eps,q_h) = -\sum_{K\in\mathcal T_h}\left(\nabla q_h,\u^\eps-r_h\u^\eps\right)_K\\
 + \sum_{f\in\mathcal F_h^i}\langle [q_h],\left(\u^\eps-r_h\u^\eps\right)\cdot\n\rangle_f+ \sum_{f\in\mathcal F_h^\partial(\Gamma_N)}\langle q_h,\left(\u^\eps-r_h\u^\eps\right)\cdot\n\rangle_f=&0\qquad&&\forall\ q_h\in Q_h.
\end{alignat*}
Moreover, by consistency, we have
\begin{align*}
\left(\u_h-\u^\eps,\vv_h\right)_{L^2(\Omega)}+\eps \langle \left( \u_h-\u^\eps\right)\cdot\n,\vv_h\cdot\n\rangle_{\Gamma_N} + b_0(\vv_h,p_h-p^\eps) =   0\qquad&\forall\ \vv_h\in V_h,\\
b_0(\u_h-\u^\eps,q_h) = 0\qquad&\forall\ q_h\in Q_h.
\end{align*}
Hence,
\begin{align*}
\mathcal A_\eps \left( \left(\bm\u_h-r_h\bm\u^\eps,p_h-\pi_h p^\eps\right),\left(\bm\tau_h,q_h\right)\right)=  \left(\u^\eps-r_h\u^\eps,\vv_h\right)_{L^2(\Omega)},
\end{align*}
and we can write
\begin{align*}
\norm{\u_h - r_h\u^\eps}_{0,h,\eps}+\norm{p_h-\pi_h p^\eps}_{1,h,\eps}\lesssim  \frac{\norm{\u^\eps-r_h\u^\eps}_{L^2(\Omega)}\vertiii{\vv_h,0}_{h,\eps}}{\vertiii{\vv_h,q_h}_{h,\eps}} 
\lesssim \norm{\u^\eps-r_h\u^\eps}_{L^2(\Omega)}.
\end{align*}
\end{proof}	

	% If it is not the case, then, by integration by parts, we get
	%\begin{align*}
	%\mathcal A \left( \left(\bm\u_h-r_h\bm\u,p_h-\pi_h p\right),\left(\bm\tau_h,q_h\right)\right)=&
	%\left( \u - r_h\u,\varphiarphiv_h\right)_{\Omega} + \sum_{K\in\mathcal T_h} \left(\nabla \left( p-\pi_h p\right),\varphiarphiv_h \right)_K -\sum_{K\in\mathcal T_h} \langle p-\pi_h p,\varphiarphiv_h\cdot\n \rangle_{\partial K} \\
	%&  + \sum_{f\in\mathcal F_h^{\partial}} \langle p-\pi_h p,\varphiarphiv_h\cdot\n\rangle _f \\
	%=&  \left( \u - r_h\u,\varphiarphiv_h\right)_{\Omega} + \sum_{K\in\mathcal T_h} \left(\nabla \left( p-\pi_h p\right),\varphiarphiv_h \right)_K - \sum_{f\in\mathcal F_h^i} \langle \left[p-\pi_h p\right],\varphiarphiv_h\cdot\n  \rangle_f \\
	%\le & \norm{\u-r_h\u}_{L^2(\Omega)} \norm{\varphiarphiv_h}_{L^2(\Omega)} +  \norm{ p-\pi_h p }_{1,h}\norm{\varphiarphiv_h}_{L^2(\Omega)}.
	%\end{align*}
	%Hence, it seems we do not gain anything.
%	\begin{proposition}\label{apriori:prop1_m=0}
%		Let $\left(\bm\u,p\right) \in \left[H^{r+1}(\Omega)\right]^d\times H^{k+1}(\Omega)$ and $s:=\min\{r,k,p\}$ be the solution of~\eqref{eq:prob_cont} and $\left(\u_h,p_h\right)\in V_h\times Q_h$ the one to~\eqref{eq:prob_disc_sym} for $m=0$.
%		\begin{equation}\label{apriori:eq5_m=0}
%		\norm{\bm\u_h-r_h\bm\u}_{0,h} + \norm{p_h-\pi_h p}_{1,h,ks}\le ???
%		\end{equation}	
%	\end{proposition}	
%	\begin{proof}
%		It follows by Lemma~\ref{apriori:lemma1_m=0} \textcolor{red}{(not available!)} and Bramble-Hilbert/Deny-Lions Lemma~\cite{qvalli}.	
%	\end{proof}	
	\begin{proposition}\label{apriori:prop1_m=1}
	Let $\left(\bm\u,p\right) \in \bm H^{r+1}(\Omega)\times H^{t+1}(\Omega)$ and $s:=\min\{r,t,k\}$ be the solution of~\eqref{eq:prob_cont} and $\left(\u_h,p_h\right)\in V_h\times Q_h$ the one to~\eqref{eq:prob_disc} with $m=1$. There exists $C>0$ such that
	\begin{equation}\label{apriori:eq5_m=1}
	\norm{\bm\u_h-r_h\bm\u}_{0,h} + \norm{p_h-\pi_h p}_{1,h}\le C h^{s+1} \left( \norm{\u}_{H^{r+1}(\Omega)} +\norm{p}_{H^{t+1}(\Omega)}\right).
	\end{equation}	
\end{proposition}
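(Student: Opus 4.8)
The plan is to reduce the whole estimate to Lemma~\ref{apriori:lemma1_m=1} and then feed it with the standard approximation properties of the Raviart--Thomas interpolant $r_h$ and of the local $L^2$-projection $\pi_h$. Lemma~\ref{apriori:lemma1_m=1} already controls the discrete error by two interpolation quantities, so it suffices to show that both $\norm{\bm\u-r_h\bm\u}_{L^2(\Omega)}$ and the boundary contribution $h^{\frac{1}{2}}\bigl(\sum_{f\in\mathcal F_h^\partial(\Gamma_N)}\norm{p-\pi_h p}^2_{L^2(f)}\bigr)^{\frac{1}{2}}$ (understood in the $\ell^2$-aggregated sense that the Cauchy--Schwarz step in that proof actually produces) are of order $h^{s+1}$. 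Combining the two bounds through the lemma then yields the claim directly.

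For the velocity term I would invoke the classical error estimate for the Raviart--Thomas interpolant of order $k$, namely $\norm{\bm\u-r_h\bm\u}_{L^2(\Omega)}\lesssim h^{\min\{r+1,k+1\}}\norm{\bm\u}_{H^{r+1}(\Omega)}$, proved element by element by passing to the reference element through the Piola transform, applying a Bramble--Hilbert argument there, and summing with shape regularity. Since $s=\min\{r,t,k\}$ forces $s+1\le\min\{r+1,k+1\}$, this gives $\norm{\bm\u-r_h\bm\u}_{L^2(\Omega)}\lesssim h^{s+1}\norm{\bm\u}_{H^{r+1}(\Omega)}$.

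For the pressure boundary term the mechanism is a scaled trace inequality together with the approximation properties of $\pi_h$. On each $K\in\mathcal T_h$ carrying a facet $f\subset\Gamma_N$, the multiplicative trace inequality scaled to $K$ gives $\norm{p-\pi_h p}^2_{L^2(f)}\lesssim h_K^{-1}\norm{p-\pi_h p}^2_{L^2(K)}+h_K\norm{\nabla(p-\pi_h p)}^2_{L^2(K)}$. Because $\pi_h|_K$ is the \emph{local} $L^2$-projection onto $\mathbb P_k(K)$ (resp.\ $\mathbb Q_k(K)$), the elementwise Bramble--Hilbert estimates $\norm{p-\pi_h p}_{L^2(K)}\lesssim h_K^{s+1}\abs{p}_{H^{s+1}(K)}$ and $\norm{\nabla(p-\pi_h p)}_{L^2(K)}\lesssim h_K^{s}\abs{p}_{H^{s+1}(K)}$ apply (valid since $s\le\min\{t,k\}$), so both contributions are of size $h_K^{2s+1}\abs{p}^2_{H^{s+1}(K)}$ and hence $\norm{p-\pi_h p}_{L^2(f)}\lesssim h_K^{s+\frac{1}{2}}\abs{p}_{H^{s+1}(K)}$. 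Summing the squares over the boundary facets, using quasi-uniformity to replace $h_K$ by $h$, and multiplying by $h^{\frac{1}{2}}$, I obtain $h^{\frac{1}{2}}\bigl(\sum_{f}\norm{p-\pi_h p}^2_{L^2(f)}\bigr)^{\frac{1}{2}}\lesssim h^{s+1}\norm{p}_{H^{s+1}(\Omega)}\le h^{s+1}\norm{p}_{H^{t+1}(\Omega)}$.

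I expect the only genuine (albeit routine) subtlety to be in this boundary term: one must keep the usable regularity capped at $s+1$ since $\pi_h$ reproduces only polynomials of degree $k$, and one must notice that the gradient contribution of the trace inequality, which a priori seems to lose half a power, in fact balances exactly against the $h_K^{-1}$-weighted bulk term, so that the correct local rate $h_K^{s+\frac{1}{2}}$ (rather than a degraded one) is achieved. Everything else is bookkeeping, and inserting the two bounds into Lemma~\ref{apriori:lemma1_m=1} closes the proof.
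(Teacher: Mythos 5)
Your proof is correct and follows essentially the same route as the paper's: reduce to Lemma~\ref{apriori:lemma1_m=1}, then bound the velocity term via the standard Raviart--Thomas/Bramble--Hilbert interpolation estimate and the boundary pressure term via a scaled trace inequality combined with the local approximation properties of $\pi_h$. The only cosmetic differences are that the paper uses the multiplicative form of the trace inequality where you use the additive one, and your remark about reading the facet sum in the $\ell^2$-aggregated sense produced by the Cauchy--Schwarz step is in fact slightly more careful bookkeeping than the paper's own display.
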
	
\begin{proof}
	By Lemma~\ref{apriori:lemma1_m=1}, a multiplicative trace inequality for Sobolev functions and standard approximation results for the $L^2$-projection
	\begin{align*}
\norm{\bm\u_h-r_h\bm\u}_{0,h} + \norm{p_h-\pi_h p}_{1,h} \lesssim & \norm{\u-r_h\u}_{L^2(\Omega)} + \sum_{f\in\mathcal F_h^\partial(\Gamma_N)}h^{\frac{1}{2}}\norm{p-\pi_h p}_{L^2(f)} \\
\lesssim & \norm{\u-r_h\u}_{L^2(\Omega)} + \sum_{K\in\mathcal T_h}h^{\frac{1}{2}}\norm{p-\pi_h p}^{\frac{1}{2}}_{L^2(K)} \norm{\nabla\left(p-\pi_h p\right)}^{\frac{1}{2}}_{L^2(K)}  \\
\lesssim &  \norm{\u-r_h\u}_{L^2(\Omega)} + h^{\frac{1}{2}}h^{\frac{t+1}{2}}\norm{p}^{\frac{1}{2}}_{H^{t+1}(\Omega)}h^{\frac{t}{2}} \norm{p}^{\frac{1}{2}}_{H^{t+1}(\Omega)}\\
= &\norm{\u-r_h\u}_{L^2(\Omega)} + h^{t+1}\norm{p}_{H^{t+1}(\Omega)}.
	\end{align*}
By using Bramble-Hilbert/Deny-Lions Lemma~\cite{qvalli}, we get
\begin{align*}
\norm{\bm\u_h-r_h\bm\u}_{0,h} + \norm{p_h-\pi_h p}_{1,h} \lesssim h^{r+1} \norm{\u}_{H^{r+1}(\Omega)} + h^{t+1}\norm{p}_{H^{t+1}(\Omega)},
\end{align*}
with $0\le t\le k$ and $0\le r \le k$.
\end{proof}

%	\begin{proposition}\label{apriori:prop1_ns}
%	Let $\left(\bm\u,p\right) \in \left[H^{r+1}(\Omega)\right]^d\times H^{k+1}(\Omega)$ and $s:=\min\{r,k-\frac{1}{2},p\}$ be the solution of~\eqref{eq:prob_cont} and $\left(\u_h,p_h\right)\in V_h\times Q_h$ the one to~\eqref{eq:prob_disc_sym} for $m=0$.
%	\begin{equation}\label{apriori:eq5_ns}
%	\norm{\bm\u_h-r_h\bm\u}_{0,h} + \norm{p_h-\pi_h p}_{1,h,?}\lesssim h^{s+1} \left( \norm{\u}_{H^{r+1}(\Omega)} +\norm{p}_{H^{k+1}(\Omega)}\right).
%	\end{equation}	
%\end{proposition}	
%\begin{proof}
%	By Lemma~\ref{apriori:lemma1_ns} and trace inequality for Sobolev spaces
%\begin{align*}
%\norm{\bm\u_h-r_h\bm\u}_{0,h} + \norm{p_h-\pi_h p}_{1,h,?} \lesssim & \norm{\u-r_h\u}_{L^2(\Omega)} + \sum_{f\in\mathcal F_h^\partial}\eps^{-\frac{1}{2}}\norm{p_h-\pi_h p}_{L^2(f)} \\
%\lesssim & \norm{\u-r_h\u}_{L^2(\Omega)} + \sum_{K\in\mathcal T_h} \eps^{-\frac{1}{2}}\norm{p_h-\pi_h p}_{H^1(K)} .
%\end{align*}
%\textcolor{red}{By choosing $\eps = \mathcal O(h^{-1})$} and using Bramble-Hilbert/Deny-Lions Lemma~\cite{qvalli}, we get
%\begin{align*}
%\norm{\bm\u_h-r_h\bm\u}_{0,h} + \norm{p_h-\pi_h p}_{1,h,?} \lesssim h^{r+1} \norm{\u}_{H^{r+1}(\Omega)} + h^{k+\frac{1}{2}}\norm{p}_{H^{k+1}(\Omega)},
%\end{align*}
%with $0\le r\le p$ and $0\le k\le p$.
%\end{proof}	

	\begin{proposition}\label{apriori:prop1_pert}
	Let $\left(\bm\u^\eps,p^\eps\right) \in \bm H^{r+1}(\Omega)\times H^{t+1}(\Omega)$ and $s:=\min\{r,k\}$ be the solution of the perturbed continuous problem~\eqref{eq:prob_cont_bis} and $\left(\u_h,p_h\right)\in V_h\times Q_h$ the one to~\eqref{eq:prob_disc_bis}. There exists $C>0$ such that
	\begin{equation}\label{apriori:eq5_pert}
	\norm{\bm\u_h-r_h\bm\u^\eps}_{0,h,\eps} + \norm{p_h-\pi_h p^\eps}_{1,h,\eps}\le C h^{s+1} \norm{\u^\eps}_{H^{s+1}(\Omega)}.
	\end{equation}	
\end{proposition}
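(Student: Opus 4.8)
The plan is to combine the quasi-optimality estimate of Lemma~\ref{apriori:lemma1_pert} with a standard interpolation bound for the Raviart-Thomas operator $r_h$. The decisive simplification relative to the first formulation (cf. Proposition~\ref{apriori:prop1_m=1}) is that the right-hand side of Lemma~\ref{apriori:lemma1_pert} contains \emph{only} the velocity interpolation error $\norm{\bm\u^\eps-r_h\bm\u^\eps}_{L^2(\Omega)}$, with no boundary contribution from the pressure. This is precisely the orthogonality effect announced at the start of this section, and it explains why the final bound depends solely on the regularity of $\bm\u^\eps$, with exponent governed by $s=\min\{r,k\}$ and no role played by the pressure regularity $t$.

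First I would invoke Lemma~\ref{apriori:lemma1_pert} to reduce the left-hand side to a single interpolation term,
\begin{align*}
\norm{\bm\u_h-r_h\bm\u^\eps}_{0,h,\eps} + \norm{p_h-\pi_h p^\eps}_{1,h,\eps}\lesssim \norm{\bm\u^\eps-r_h\bm\u^\eps}_{L^2(\Omega)}.
\end{align*}
Then I would apply the classical $L^2$-estimate for the Raviart-Thomas interpolant: since $\bm\u^\eps\in\bm H^{r+1}(\Omega)$ and the degrees of freedom defining $r_K$ reproduce polynomials of the relevant degree on each $K$, a scaling argument together with the Bramble-Hilbert/Deny-Lions Lemma~\cite{qvalli} yields
\begin{align*}
\norm{\bm\u^\eps-r_h\bm\u^\eps}_{L^2(\Omega)}\lesssim h^{s+1}\norm{\bm\u^\eps}_{H^{s+1}(\Omega)},\qquad s=\min\{r,k\}.
\end{align*}
Chaining these two inequalities gives~\eqref{apriori:eq5_pert}.

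I do not foresee a genuine obstacle here, so the hard part is merely bookkeeping: one must check that the interpolation estimate is invoked with the correct exponent $s+1$, reflecting saturation of the convergence rate at the polynomial order $k$ of the velocity space, and that no negative power of $\eps$ enters the constant. The latter is guaranteed because, exactly as in Lemma~\ref{apriori:lemma1_pert}, the $\eps$-weighted boundary terms are annihilated by the very definition of the interpolant $r_h$ through the Raviart-Thomas degrees of freedom, so every constant remains independent of $\eps$.
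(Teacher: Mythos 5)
Your proposal is correct and follows exactly the paper's own argument: Lemma~\ref{apriori:lemma1_pert} reduces the left-hand side to the velocity interpolation error, which is then bounded by $h^{s+1}\norm{\bm\u^\eps}_{H^{s+1}(\Omega)}$ via the Bramble--Hilbert/Deny--Lions Lemma. Your additional remarks on the $\eps$-independence of the constants and the absence of any pressure-regularity contribution are accurate elaborations of the same proof.
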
	
\begin{proof}
	By Lemma~\ref{apriori:lemma1_pert} and Bramble-Hilbert/Deny-Lions Lemma~\cite{qvalli}, we get
	\begin{align*}
	\norm{\bm\u_h-r_h\bm\u^\eps}_{0,h,\eps} + \norm{p_h-\pi_h p^\eps}_{1,h,\eps} \lesssim h^{s+1} \norm{\u^\eps}_{H^{s+1}(\Omega)}.
	\end{align*}
\end{proof}	

	\begin{remark}
		Let us remark that the quantities $\norm{\u_h-r_h\u_h}_{0,h}$, $\norm{p_h-\pi_h p}_{1,h}$ in~\eqref{apriori:eq5_m=1} and $\norm{p_h-\pi_h p^\eps}_{1,h,\eps}$, $\norm{\u_h-r_h\u^\eps}_{0,h,\eps}$  in~\eqref{apriori:eq5_pert}, respectively, are super convergent.
	\end{remark}
%	\begin{theorem}\label{apriori:theorem1_m=0}
%		Let $\left(\bm\u,p\right) \in  \left[H^{r+1}(\Omega)\right]^d\times H^{k+1}(\Omega)$ and $s:=\min\{r,k-\frac{1}{2},p\}$ be the solution to~\eqref{eq:prob_cont} and $\left(\bm\u_h,p_h\right) \in  V_h\times Q_h$ the one to~\eqref{eq:prob_disc_sym} for $m=0$. Then
%		\begin{align*}
%			\norm{\u-\u_h}_{L^2(\Omega)}\lesssim &  h^{s+1}\left( \norm{\bm\u}_{H^{r+1}(\Omega)} + \norm{p}_{H^{k+1}(\Omega)}\right).
%		\end{align*}
%	\end{theorem}
%	\begin{proof}
%		Let us proceed by triangular inequality.
%		\begin{align*}
%			\norm{\u-\u_h}_{L^2(\Omega)} \le \norm{\u-r_h\u}_{L^2(\Omega)}+ \norm{r_h\u-\u_h}_{L^2(\Omega)}.
%		\end{align*}
%		The first and the second terms in the rhs scale as $\mathcal O(h^{r+1})$ and $\mathcal O (h^{s+1})$, respectively, because of Bramble-Hilbert/Deny-Lions Lemma~\cite{qvalli} and Proposition~\ref{apriori:prop1_m=0} \textcolor{red}{(not available!)}
%	\end{proof}
	\begin{theorem}\label{apriori:theorem1_m=1}
	Let $\left(\bm\u,p\right) \in  \bm H^{r+1}(\Omega)\times H^{t+1}(\Omega)$ be the solution to~\eqref{eq:prob_cont} and $\left(\bm\u_h,p_h\right) \in  V_h\times Q_h$ the one to~\eqref{eq:prob_disc} with $m=1$. Then there exists $C>0$ such that, for $s:=\min\{r,t,k\}$,
	\begin{align*}
	\norm{\u-\u_h}_{L^2(\Omega)}\le C &  h^{s+1}\left( \norm{\bm\u}_{H^{r+1}(\Omega)} + \norm{p}_{H^{t+1}(\Omega)}\right).
	\end{align*}
\end{theorem}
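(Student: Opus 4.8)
The plan is to reduce the $L^2$-velocity error to the super convergent quantity already controlled in Proposition~\ref{apriori:prop1_m=1}, combined with a standard interpolation estimate, by means of a triangle inequality. First I would split the error through the Raviart--Thomas interpolant $r_h\u$:
\[
\norm{\u-\u_h}_{L^2(\Omega)}\le \norm{\u-r_h\u}_{L^2(\Omega)}+\norm{r_h\u-\u_h}_{L^2(\Omega)},
\]
thereby isolating the genuine interpolation error of $\u$ from the discrete difference $r_h\u-\u_h$, which is precisely the object estimated in the super convergence result.

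For the second term I would exploit that the mesh-dependent norm $\norm{\cdot}_{0,h}$ dominates the plain $L^2$-norm, since its boundary contribution is non-negative; thus $\norm{\vv_h}_{L^2(\Omega)}\le \norm{\vv_h}_{0,h}$ for every $\vv_h\in V_h$. Applying this to $\vv_h=r_h\u-\u_h\in V_h$ and invoking Proposition~\ref{apriori:prop1_m=1} yields
\[
\norm{r_h\u-\u_h}_{L^2(\Omega)}\le \norm{\u_h-r_h\u}_{0,h}\le C\,h^{s+1}\left(\norm{\u}_{H^{r+1}(\Omega)}+\norm{p}_{H^{t+1}(\Omega)}\right).
\]

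For the first term I would appeal to the standard $L^2$-approximation property of $r_h$, namely $\norm{\u-r_h\u}_{L^2(\Omega)}\le C\,h^{r+1}\norm{\u}_{H^{r+1}(\Omega)}$ for $0\le r\le k$; this is legitimate because $\u\in\bm H^{r+1}(\Omega)$ with $r+1>\tfrac12$, so that $r_h\u$ is well defined. Since $s=\min\{r,t,k\}\le r$ and $h\le 1$, we have $h^{r+1}\le h^{s+1}$, so this contribution is likewise bounded by $C\,h^{s+1}\norm{\u}_{H^{r+1}(\Omega)}$. Summing the two estimates gives the claim.

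I do not foresee a genuine obstacle: the entire analytic content has been front-loaded into the super convergence estimate of Proposition~\ref{apriori:prop1_m=1}. The only point worth emphasising is that the unbalanced norm $\norm{\cdot}_{0,h}$ controls the $L^2$-norm with no loss---the Neumann penalty term entering with a favourable sign---so that the optimal $\mathcal O(h^{s+1})$ rate established for the discrete difference transfers verbatim to the $L^2$-velocity error, recovering the classical optimal order despite the norms not scaling as $L^2$.
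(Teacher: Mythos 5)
Your proposal is correct and follows essentially the same route as the paper: a triangle inequality through the interpolant $r_h\u$, the Bramble--Hilbert estimate $\norm{\u-r_h\u}_{L^2(\Omega)}\lesssim h^{r+1}\norm{\u}_{H^{r+1}(\Omega)}$ for the first term, and Proposition~\ref{apriori:prop1_m=1} (together with the trivial bound $\norm{\cdot}_{L^2(\Omega)}\le\norm{\cdot}_{0,h}$) for the second. Your write-up merely makes explicit the steps the paper leaves implicit.
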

\begin{proof}
	Let us proceed by triangular inequality.
	\begin{align*}
	\norm{\u-\u_h}_{L^2(\Omega)} \le \norm{\u-r_h\u}_{L^2(\Omega)}+ \norm{r_h\u-\u_h}_{L^2(\Omega)}.
	\end{align*}
	The first and the second terms in the rhs scale as $\mathcal O(h^{r+1})$ and $\mathcal O (h^{s+1})$, respectively, because of Bramble-Hilbert/Deny-Lions Lemma~\cite{qvalli} and Proposition~\ref{apriori:prop1_m=1}.
\end{proof}
%	\begin{theorem}\label{apriori:theorem1_ns}
%	Let $\left(\bm\u,p\right) \in  \left[H^{r+1}(\Omega)\right]^d\times H^{k+1}(\Omega)$ and $s:=\min\{r,k-\frac{1}{2},p\}$ be the solution to~\eqref{eq:prob_cont} and $\left(\bm\u_h,p_h\right) \in  V_h\times Q_h$ the one to~\eqref{eq:prob_disc_sym} for $m=0$. Then
%	\begin{align*}
%	\norm{\u-\u_h}_{L^2(\Omega)}\lesssim &  h^{s+1}\left( \norm{\bm\u}_{H^{r+1}(\Omega)} + \norm{p}_{H^{k+1}(\Omega)}\right).
%	\end{align*}
%\end{theorem}
%\begin{proof}
%	Let us proceed by triangular inequality.
%	\begin{align*}
%	\norm{\u-\u_h}_{L^2(\Omega)} \le \norm{\u-r_h\u}_{L^2(\Omega)}+ \norm{r_h\u-\u_h}_{L^2(\Omega)}.
%	\end{align*}
%	The first and the second terms in the rhs scale as $\mathcal O(h^{r+1})$ and $\mathcal O (h^{s+1})$, respectively, because of Bramble-Hilbert/Deny-Lions Lemma~\cite{qvalli} and Proposition~\ref{apriori:prop1_ns}.
%\end{proof}
	\begin{lemma}\label{apriori:lemma2_pert}
	Let $\left(\bm\u^\eps,p^\eps\right) \in  \bm H^{r+1}(\Omega)\times H^{t+1}(\Omega)$ be the solution to the perturbed continuous problem~\eqref{eq:prob_cont_bis} and $\left(\bm\u_h,p_h\right) \in  V_h\times Q_h$ the one to~\eqref{eq:prob_disc_bis}. Then there exists $C>0$ such that, for $s:=\min\{r,k\}$,
	\begin{align*}
	\norm{\u^\eps-\u_h}_{L^2(\Omega)}\le C   h^{s+1} \norm{\bm\u^\eps}_{H^{s+1}(\Omega)}.
	\end{align*}
\end{lemma}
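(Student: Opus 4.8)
The plan is to follow the same route as the proof of Theorem~\ref{apriori:theorem1_m=1}, exploiting the super convergent estimate of Proposition~\ref{apriori:prop1_pert} together with the elementary fact that the mesh-dependent norm $\norm{\cdot}_{0,h,\eps}$ controls the plain $L^2$-norm with a constant independent of $\eps$. First I would insert the Raviart-Thomas interpolant $r_h\u^\eps$ and split the error by the triangle inequality,
\begin{align*}
\norm{\u^\eps-\u_h}_{L^2(\Omega)} \le \norm{\u^\eps-r_h\u^\eps}_{L^2(\Omega)} + \norm{r_h\u^\eps-\u_h}_{L^2(\Omega)}.
\end{align*}
Note that, unlike in the first formulation, no pressure-regularity term should appear on the right-hand side: Lemma~\ref{apriori:lemma1_pert} already bounds the discrete error purely by $\norm{\u^\eps-r_h\u^\eps}_{L^2(\Omega)}$, the boundary orthogonality $b_0(\vv_h,p^\eps-\pi_h p^\eps)=0$ leaving no leftover $\langle p^\eps-\pi_h p^\eps,\vv_h\cdot\n\rangle_{\Gamma_N}$ contribution.

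For the interpolation term, standard approximation theory for the Raviart-Thomas operator (the Bramble-Hilbert/Deny-Lions Lemma~\cite{qvalli}) gives $\norm{\u^\eps-r_h\u^\eps}_{L^2(\Omega)}\lesssim h^{s+1}\norm{\u^\eps}_{H^{s+1}(\Omega)}$ with $s=\min\{r,k\}$, the saturation at order $k$ stemming from the polynomial degree of $V_h$. For the discrete term I would observe directly from the definition of the norm that
\begin{align*}
\norm{\vv_h}_{L^2(\Omega)}^2 \le \norm{\vv_h}_{L^2(\Omega)}^2 + \sum_{f\in\mathcal F_h^\partial(\Gamma_N)}\eps^{-1}\norm{\vv_h\cdot\n}_{L^2(f)}^2 = \norm{\vv_h}_{0,h,\eps}^2 \qquad\forall\ \vv_h\in V_h,
\end{align*}
so that $\norm{r_h\u^\eps-\u_h}_{L^2(\Omega)}\le \norm{r_h\u^\eps-\u_h}_{0,h,\eps}$, and then invoke Proposition~\ref{apriori:prop1_pert} to bound the latter by $C h^{s+1}\norm{\u^\eps}_{H^{s+1}(\Omega)}$. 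Summing the two contributions yields the asserted estimate.

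The argument is essentially routine, and I do not expect a genuine obstacle; the only point deserving care is the $\eps$-independence of $C$. This need not be re-established here: it is inherited from Proposition~\ref{apriori:prop1_pert}, whose constant is already $\eps$-free thanks to the orthogonality of the interpolants along $\Gamma_N$ noted at the start of the section, and from the fact that the embedding $\norm{\cdot}_{L^2(\Omega)}\le\norm{\cdot}_{0,h,\eps}$ holds with constant $1$ uniformly in $\eps$. The subtlety, if any, is conceptual rather than technical: one must resist bounding the $L^2$-velocity error through the $H(\dive)$-type quantities one would expect for the mixed Darcy problem, and instead channel everything through the deliberately unbalanced norm $\norm{\cdot}_{0,h,\eps}$, whose $L^2$-component is precisely what remains stable as $\eps\to 0^+$.
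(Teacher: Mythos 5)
Your proposal is correct and follows the paper's own proof essentially verbatim: a triangle inequality through the interpolant $r_h\bm\u^\eps$, Bramble--Hilbert/Deny--Lions for the interpolation error, and Proposition~\ref{apriori:prop1_pert} (via the trivial bound of the $L^2$-norm by $\norm{\cdot}_{0,h,\eps}$) for the discrete part. Your added remarks on the $\eps$-independence of the constants merely make explicit what the paper leaves implicit.
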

\begin{proof}
	Let us proceed by triangular inequality.
	\begin{align*}
	\norm{\u^\eps-\u_h}_{L^2(\Omega)} \le \norm{\u^\eps-r_h\u^\eps}_{L^2(\Omega)}+ \norm{r_h\u^\eps-\u_h}_{L^2(\Omega)}.
	\end{align*}
	The first and the second terms in the rhs scale as $\mathcal O(h^{s+1})$, respectively, because of Bramble-Hilbert/Deny-Lions Lemma~\cite{qvalli} and Proposition~\ref{apriori:prop1_pert}.
\end{proof}
	\begin{theorem}\label{apriori:theorem1_pert}
	Let $\left(\bm\u,p\right) \in  \bm H^{2}(\Omega)\times H^{t+1}(\Omega)$ be the solution to the continuous~\eqref{eq:prob_cont} and $\left(\bm\u_h,p_h\right) \in  V_h\times Q_h$ the one to~\eqref{eq:prob_disc_bis}. Assume $\Omega$ to be a convex with a  Lipschitz polygonal boundary $\Gamma$, $\f\in \bm H^{1}(\Omega)$ and $u_N=0$. Then, there exists $C>0$ such that
	\begin{align*}
	\norm{\u-\u_h}_{L^2(\Omega)} \le C h \left(\norm{ \f}_{H^1(\Omega)} + \norm{g}_{L^2(\Omega)}+ \norm{p_D}_{H^\frac{1}{2}(\Gamma_D)} \right).
	\end{align*}
\end{theorem}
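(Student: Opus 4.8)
The plan is to introduce the solution $\u^\eps$ of the perturbed continuous problem~\eqref{eq:prob_cont_bis}, with $\eps$ chosen exactly as the penalty parameter of the second formulation, i.e.\ $\eps=h^{k+1}$, as the natural bridge between the continuous solution $\u$ of~\eqref{eq:prob_cont} and the discrete solution $\u_h$ of~\eqref{eq:prob_disc_bis}. The triangle inequality then gives
\begin{align*}
\norm{\u-\u_h}_{L^2(\Omega)} \le \norm{\u-\u^\eps}_{L^2(\Omega)} + \norm{\u^\eps-\u_h}_{L^2(\Omega)},
\end{align*}
and I would estimate the \emph{perturbation error} $\norm{\u-\u^\eps}_{L^2(\Omega)}$ and the \emph{discretization error} $\norm{\u^\eps-\u_h}_{L^2(\Omega)}$ separately.

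For the perturbation error I would invoke Proposition~\ref{prop:conv_eps}, whose hypothesis $\f\in\bm H(\dive;\Omega)$ is guaranteed by $\f\in\bm H^1(\Omega)$. Since $u_N=0$ and $\norm{\dive\f}_{L^2(\Omega)}\le\norm{\f}_{H^1(\Omega)}$, it yields
\begin{align*}
\norm{\u-\u^\eps}_{L^2(\Omega)} \le C\eps\left(\norm{\f}_{H^1(\Omega)}+\norm{g}_{L^2(\Omega)}+\norm{p_D}_{H^{\frac{1}{2}}(\Gamma_D)}\right).
\end{align*}
Because the second formulation fixes $\eps=h^{k+1}$ with $k\ge 0$, one has $\eps\le h$ for $h\le 1$, so the perturbation error is already of the desired order $\mathcal O(h)$.

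For the discretization error I would appeal to Lemma~\ref{apriori:lemma2_pert}, keeping in mind that all the constants for the formulation~\eqref{eq:prob_disc_bis} are independent of $\eps$. The crucial point is that the only regularity of $\u^\eps$ available \emph{uniformly in} $\eps$ is the $H^1$-bound of Proposition~\ref{prop:stability_eps_simple}, whose hypotheses (convex $\Omega$ with Lipschitz polygonal boundary via Remark~\ref{remark:bd}, $\f\in\bm H^1(\Omega)$, $u_N=0$) coincide with those of the present theorem. I would therefore apply Lemma~\ref{apriori:lemma2_pert} with the reduced regularity index $s=0$ (which is anyway forced when $k=0$), obtaining
\begin{align*}
\norm{\u^\eps-\u_h}_{L^2(\Omega)} \le Ch\,\norm{\u^\eps}_{H^1(\Omega)} \le Ch\left(\norm{\f}_{H^1(\Omega)}+\norm{g}_{L^2(\Omega)}\right),
\end{align*}
where the last step is exactly the uniform stability estimate~\eqref{burman:eq20bis_simple}. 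Summing the two contributions then gives the claimed bound.

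The main obstacle is conceptual rather than computational: the $\eps^{-1}$-penalty would destroy the estimate as $h\to 0$ unless every constant entering the discretization error is genuinely independent of $\eps$, which is precisely what the orthogonality of the interpolants along $\Gamma_N$ secures in Lemma~\ref{apriori:lemma1_pert} and hence in Lemma~\ref{apriori:lemma2_pert}. A second delicate point is that, uniformly in $\eps$, we control $\u^\eps$ only in $H^1$, so the discretization error cannot be pushed beyond $\mathcal O(h)$; this is what confines the optimal $L^2$-rate to the lowest-order element $k=0$ and justifies the choice $s=0$ above. Finally, the balance $\eps=h^{k+1}\le h$ is exactly what prevents the perturbation error from dominating: a larger $\eps$ would break the splitting and spoil the rate $h$.
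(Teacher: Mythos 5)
Your proposal is correct and follows essentially the same route as the paper: the triangle inequality through $\u^\eps$, Proposition~\ref{prop:conv_eps} for the perturbation error, Lemma~\ref{apriori:lemma2_pert} with $s=0$ for the discretization error, and the $\eps$-uniform bound of Proposition~\ref{prop:stability_eps_simple} together with Remark~\ref{remark:bd}. The only (immaterial) difference is that the paper simply sets $\eps=h$ at the end, whereas you keep $\eps=h^{k+1}$ and observe $\eps\le h$; both give the claimed $\mathcal O(h)$ rate.
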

\begin{proof}
	Let us proceed by triangular inequality.
	\begin{align*}
	\norm{\u-\u_h}_{L^2(\Omega)} \le & \norm{\u-\u^\eps}_{L^2(\Omega)}+ \norm{\u^\eps-\u_h}_{L^2(\Omega)} \lesssim \eps \left(\norm{\dive \f}_{L^2(\Omega)} + \norm{g}_{L^2(\Omega)}+ \norm{p_D}_{H^\frac{1}{2}(\Gamma_D)}	\right)+ h \norm{\u^\eps}_{H^{1}(\Omega)}\\
	 \lesssim  &  \eps \left(\norm{\dive \f}_{L^2(\Omega)} + \norm{g}_{L^2(\Omega)}+ \norm{p_D}_{H^\frac{1}{2}(\Gamma_D)}	\right)+ h\left( \norm{\f}_{H^{1}(\Omega)} + \norm{g}_{L^{2}(\Omega)}  \right).
	\end{align*} 
We used Lemma~\ref{apriori:lemma2_pert}, Proposition~\ref{prop:conv_eps}, and finally Proposition~\ref{prop:stability_eps_simple} combined with Remark~\ref{remark:bd}. Finally, let us choose we just choose $\eps=h$.
\end{proof}

\begin{remark}
We observe that for both formulations,~\eqref{eq:prob_disc} and~\eqref{eq:prob_disc_bis}, all dimensionless parameters have been set for simplicity to $1$, unlike for the standard Nitsche method for the Poisson problem~\cite{STENBERG1995139}, where the dimensionless parameter needs to be taken large enough.
\end{remark}

\section{Numerical examples}
\subsection{Convergence results}
In this first set of numerical examples we verify that the optimal a priori error estimates of Theorems~\ref{apriori:theorem1_m=1},~\ref{apriori:theorem1_pert}. We also check that the result of Theorem~\ref{apriori:theorem1_m=1} holds in the non-symmetric case $m=0$, as already mentioned in section~\ref{sec:fe_disc}. Moreover, we study the $L^2$ error of the pressure field, for which optimal convergence is observed in general and super convergence in the case of the lowest order Raviart-Thomas element and triangular meshes. 

Although Theorem~\ref{apriori:theorem1_pert} guarantees us optimal a priori error estimates for the discretization~\eqref{eq:prob_disc_bis} only with the lowest order Raviart-Thomas element, numerical results show that we have optimal convergence rates also for higher orders.
%The results are obtained with FreeFem++: we are allowed to use the Raviart-Thomas finite element just for $k\in\{0,1,2\}$.
\subsubsection{Unit square with triangular meshes}\label{unit_square_triangles}
We approximate the Darcy problem in the unit square $\Omega=\left(0,1\right)^2$ using a family of triangular meshes, with weakly enforced Neumann boundary conditions on the whole boundary, using as manufactured solutions
\begin{align*}
\u_{ex}=
\begin{pmatrix}
x\sin(x)\sin(y)\\
\sin(x)\cos(y) + x\cos(x)\cos(y)	
\end{pmatrix},\qquad
p_{ex} = x^3 y - 0.125.	
\end{align*}
Note that $\u_{ex}$ is divergence-free. The numerical results are in Figures~\ref{num_exp:fig1_ff},~\ref{num_exp:fig2_ff} and~\ref{num_exp:fig3_ff}.
\begin{figure}[!ht]
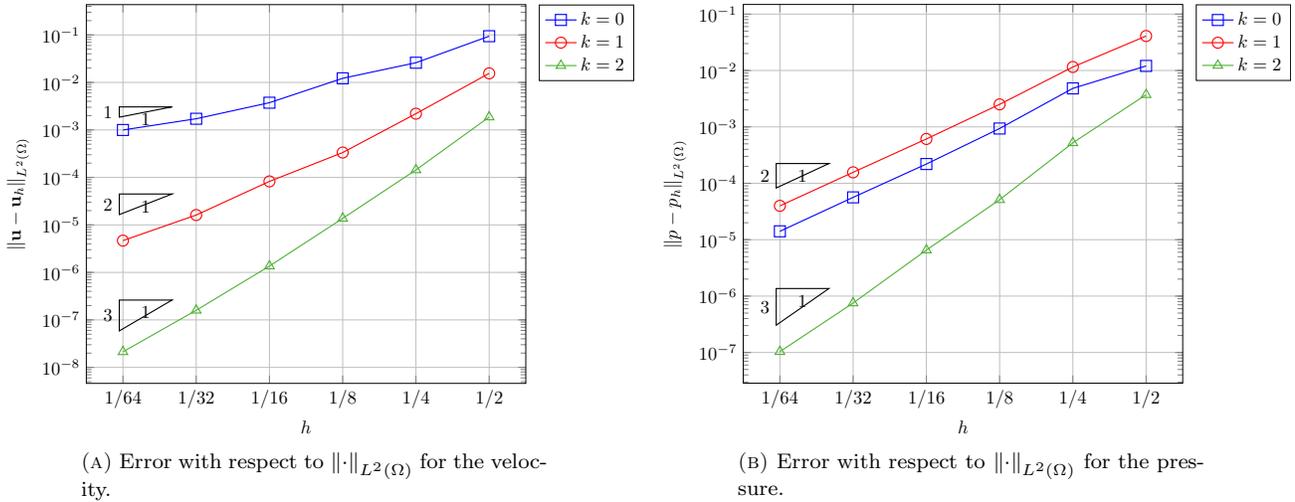

	\centering
	\subfloat[][Error with respect to $\norm{\cdot}_{L^2(\Omega)}$ for the velocity.]
	{
		\includestandalone[width=0.5\textwidth,keepaspectratio=true]{Figures/error_vel_l2_unit_square_ff}
	}
	\subfloat[][Error with respect to $\norm{\cdot}_{L^2(\Omega)}$ for the pressure.]
	{
		\includestandalone[width=0.5\textwidth,keepaspectratio=true]{Figures/error_press_unit_square_ff}
	}
	\caption{Convergence errors in the ``unit square'' using~\eqref{eq:prob_disc} with $m=1$ with triangular meshes.}\label{num_exp:fig1_ff}
\end{figure}
\begin{figure}[!ht]
	\centering
	\subfloat[][Error with respect to $\norm{\cdot}_{L^2(\Omega)}$ for the velocity.]
	{
		\includestandalone[width=0.5\textwidth,keepaspectratio=true]{Figures/error_vel_l2_unit_square_ff_m=0}
	}
	\subfloat[][Error with respect to $\norm{\cdot}_{L^2(\Omega)}$ for the pressure.]
	{
		\includestandalone[width=0.5\textwidth,keepaspectratio=true]{Figures/error_press_unit_square_ff_m=0}
	}
	\caption{Convergence errors in the ``unit square'' using~\eqref{eq:prob_disc} with $m=0$ with triangular meshes.}\label{num_exp:fig2_ff}
\end{figure}
\begin{figure}[!ht]
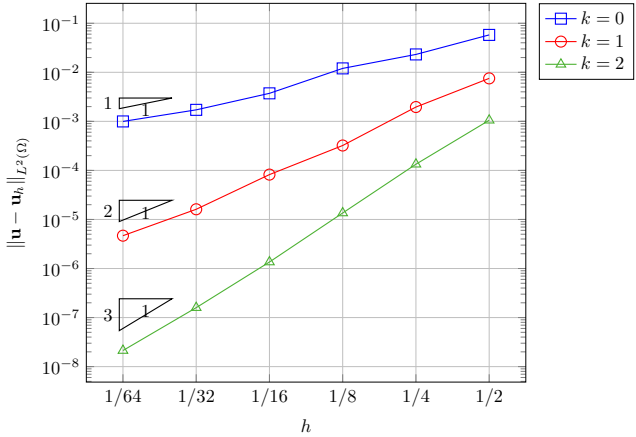
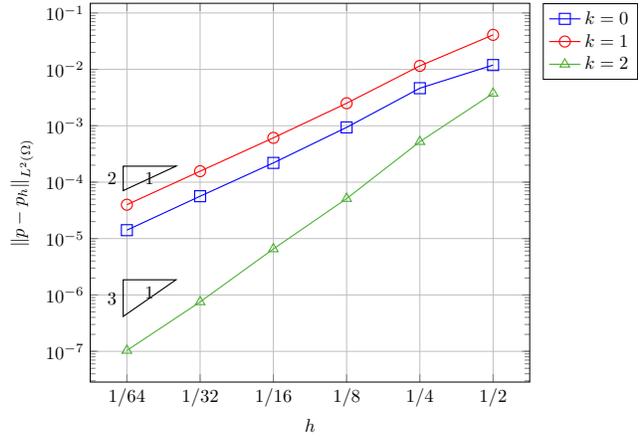

	\centering
	\subfloat[][Error with respect to $\norm{\cdot}_{L^2(\Omega)}$ for the velocity.]
	{
		\includestandalone[width=0.5\textwidth,keepaspectratio=true]{Figures/error_vel_l2_unit_square_ff_bis}
	}
	\subfloat[][Error with respect to $\norm{\cdot}_{L^2(\Omega)}$ for the pressure.]
	{
		\includestandalone[width=0.5\textwidth,keepaspectratio=true]{Figures/error_press_unit_square_ff_bis}
	}
	\caption{Convergence errors in the ``unit square'' using~\eqref{eq:prob_disc_bis} with triangular meshes.}\label{num_exp:fig3_ff}
\end{figure}

\subsubsection{Unit circle with triangular meshes}\label{unit_circle}
Now, we consider the unit circle $\Omega=\{(x,y)\in R^2:x^2+y^2\le 1\}$ which is meshed using triangles. We weakly impose the essential boundary conditions on the boundary and consider the following reference solutions:
\begin{align*}
	\u_ex=
	\begin{pmatrix}
		\frac{1}{10}e^x\sin(xy)\\
		x^4+y^2	
	\end{pmatrix},\qquad
	p_{ex} = 	x^3\cos(x)+y^2\sin(x).
\end{align*}
This time $\dive \u_{ex}=2y+\frac{1}{10}\left( e^x\sin(xy)+ye^x\cos(xy)\right)$. See Figures~\ref{num_exp:fig4_ff},~\ref{num_exp:fig6_ff} and~\ref{num_exp:fig8_ff}.
\begin{figure}[!ht]
	\centering
	\subfloat[][Error with respect to $\norm{\cdot}_{L^2(\Omega)}$.]
	{
		\includestandalone[width=0.5\textwidth,keepaspectratio=true]{Figures/error_vel_l2_unit_circle_ff}
	}
	\subfloat[][Error with respect to $\norm{\cdot}_{L^2(\Omega)}$ for the pressure.]
	{
		\includestandalone[width=0.5\textwidth,keepaspectratio=true]{Figures/error_press_unit_circle_ff}
	}
	\caption{Convergence errors in the``unit circle'' using~\eqref{eq:prob_disc} with $m=1$ with triangular meshes.}\label{num_exp:fig4_ff}
\end{figure}
\begin{figure}[!ht]
	\centering
	\subfloat[][Error with respect to $\norm{\cdot}_{L^2(\Omega)}$.]
	{
		\includestandalone[width=0.5\textwidth,keepaspectratio=true]{Figures/error_vel_l2_unit_circle_ff_m=0}
	}
	\subfloat[][Error with respect to $\norm{\cdot}_{L^2(\Omega)}$ for the pressure.]
	{
		\includestandalone[width=0.5\textwidth,keepaspectratio=true]{Figures/error_press_unit_circle_ff_m=0}
	}
	\caption{Convergence errors in the ``unit circle'' using~\eqref{eq:prob_disc} with $m=0$ with triangular meshes.}\label{num_exp:fig6_ff}
\end{figure}
\begin{figure}[!ht]
	\centering
	\subfloat[][Error with respect to $\norm{\cdot}_{L^2(\Omega)}$.]
	{
		\includestandalone[width=0.5\textwidth,keepaspectratio=true]{Figures/error_vel_l2_unit_circle_ff_bis}
	}
	\subfloat[][Error with respect to $\norm{\cdot}_{L^2(\Omega)}$ for the pressure.]
	{
		\includestandalone[width=0.5\textwidth,keepaspectratio=true]{Figures/error_press_unit_circle_ff_bis}
	}
	\caption{Convergence errors in the ``unit circle'' using~\eqref{eq:prob_disc_bis} with triangular meshes.}\label{num_exp:fig8_ff}
\end{figure}

%--------------------------
\subsubsection{Unit square with quadrilateral meshes}\label{unit_square_quads}
Let us consider the unit square $\Omega=\left(0,1\right)^2$ meshed using quadrilaterals. We impose natural boundary conditions on $\{(x,y): 0\le x\le 1, y=0\}$ and essential boundary conditions everywhere else in a weak sense. The reference solutions are:
\begin{align*}
\u_{ex} =
\begin{pmatrix}
\cos(x)\operatorname{cosh}(y\\
\sin(x)\operatorname{cosh}(y))	
\end{pmatrix},\qquad
p_{ex}= -\sin(x)\sinh (y)- \left(\cos(1)-1 \right)\left(\cosh(1)-1\right).
\end{align*}
We have $\dive\u_{ex}=0$. For the numerical results we refer to Figures~\ref{num_exp:fig1},~\ref{num_exp:fig1_m=0} and~\ref{num_exp:fig1_bis}.
\begin{figure}[!ht]
	\centering
	\subfloat[][Error with respect to $\norm{\cdot}_{L^2(\Omega)}$ for the velocity.]
	{
		\includestandalone[width=0.5\textwidth,keepaspectratio=true]{Figures/error_vel_l2_unit_square}
	}
	\subfloat[][Error with respect to $\norm{\cdot}_{L^2(\Omega)}$ for the pressure.]
	{
		\includestandalone[width=0.5\textwidth,keepaspectratio=true]{Figures/error_press_unit_square}
	}
	\caption{Convergence errors in the ``unit square'' using~\eqref{eq:prob_disc} with $m=1$ with quadrilateral meshes.}\label{num_exp:fig1}
\end{figure}
\begin{figure}[!ht]
	\centering
	\subfloat[][Error with respect to $\norm{\cdot}_{L^2(\Omega)}$ for the velocity.]
	{
		\includestandalone[width=0.5\textwidth,keepaspectratio=true]{Figures/error_vel_l2_unit_square_m=0}
	}
	\subfloat[][Error with respect to $\norm{\cdot}_{L^2(\Omega)}$ for the pressure.]
	{
		\includestandalone[width=0.5\textwidth,keepaspectratio=true]{Figures/error_press_unit_square_m=0}
	}
	\caption{Convergence errors in the ``unit square'' using~\eqref{eq:prob_disc} with $m=0$ with quadrilateral meshes.}\label{num_exp:fig1_m=0}
\end{figure}
\begin{figure}[!ht]
	\centering
	\subfloat[][Error with respect to $\norm{\cdot}_{L^2(\Omega)}$ for the velocity.]
	{
		\includestandalone[width=0.5\textwidth,keepaspectratio=true]{Figures/error_vel_l2_unit_square_bis}
	}
	\subfloat[][Error with respect to $\norm{\cdot}_{L^2(\Omega)}$ for the pressure.]
	{
		\includestandalone[width=0.5\textwidth,keepaspectratio=true]{Figures/error_press_unit_square_bis}
	}
	\caption{Convergence errors in the ``unit square'' using~\eqref{eq:prob_disc_bis} with quadrilateral meshes.}\label{num_exp:fig1_bis}
\end{figure}
%------------------------------------------------------------------------

\subsubsection{Quarter of annulus with quadrilateral isoparametric elements}\label{quarter_annulus}
Let us consider the quarter of annulus centered in the origin with inner and outer radii, respectively, $r=1$ and $R=2$, discretized using quadrilateral isoparametric elements~\cite{claes}. We impose natural boundary conditions on the straight edges $\{(x,y):1\le x\le 2, y=0\}$ and $\{(x,y):x=0, 1\le y\le 2\}$ and weak essential boundary conditions on the curved ones. The manufactured solutions are:
\begin{align*}
	\u_{ex} =
	\begin{pmatrix}
		 -xy^2\\
		-x^2y-\frac{3}{2}y^2
	\end{pmatrix},\qquad
	p_{ex}= \frac{1}{2}\left( x^2y^2 + y^3\right),
\end{align*}
with $\dive \u_{ex}= -x^2 - y^2 - 3y$. See Figures~\ref{num_exp:fig3},~\ref{num_exp:fig3_m=0} and~\ref{num_exp:fig3_bis}.
\begin{figure}[!ht]
	\centering
	\subfloat[][Error with respect to $\norm{\cdot}_{L^2(\Omega)}$.]
	{
		\includestandalone[width=0.5\textwidth,keepaspectratio=true]{Figures/error_vel_l2_quarter_annulus}
	}
	\subfloat[][Error with respect to $\norm{\cdot}_{L^2(\Omega)}$ for the pressure.]
	{
		\includestandalone[width=0.5\textwidth,keepaspectratio=true]{Figures/error_press_quarter_annulus}
	}
	\caption{Convergence errors in the ``quarter of annulus'' using~\eqref{eq:prob_disc} with $m=1$ with isoparametric quadrilateral elements.}\label{num_exp:fig3}
\end{figure}
\begin{figure}[!ht]
	\centering
	\subfloat[][Error with respect to $\norm{\cdot}_{L^2(\Omega)}$.]
	{
		\includestandalone[width=0.5\textwidth,keepaspectratio=true]{Figures/error_vel_l2_quarter_annulus_m=0}
	}
	\subfloat[][Error with respect to $\norm{\cdot}_{L^2(\Omega)}$ for the pressure.]
	{
		\includestandalone[width=0.5\textwidth,keepaspectratio=true]{Figures/error_press_quarter_annulus_m=0}
	}
	\caption{Convergence errors in the ``quarter of annulus'' using~\eqref{eq:prob_disc} with $m=0$ with isoparametric quadrilateral elements.}\label{num_exp:fig3_m=0}
\end{figure}
\begin{figure}[!ht]
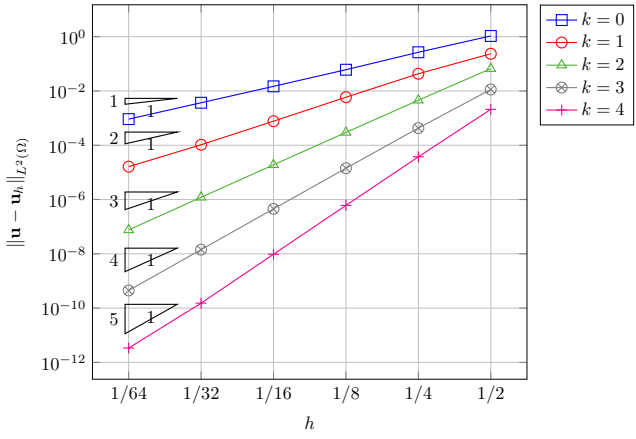
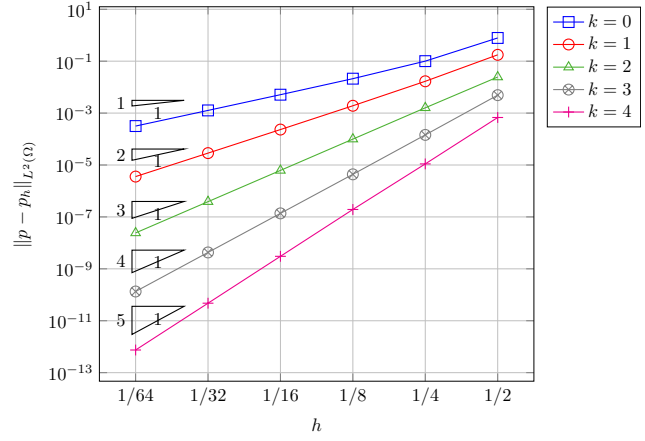

	\centering
	\subfloat[][Error with respect to $\norm{\cdot}_{L^2(\Omega)}$.]
	{
		\includestandalone[width=0.5\textwidth,keepaspectratio=true]{Figures/error_vel_l2_quarter_annulus_bis}
	}
	\subfloat[][Error with respect to $\norm{\cdot}_{L^2(\Omega)}$ for the pressure.]
	{
		\includestandalone[width=0.5\textwidth,keepaspectratio=true]{Figures/error_press_quarter_annulus_bis}
	}
	\caption{Convergence errors in the ``quarter of annulus'' using~\eqref{eq:prob_disc_bis} with isoparametric quadrilateral elements.}\label{num_exp:fig3_bis}
\end{figure}
%----------------------------------------------------
\FloatBarrier
% Condition numbers
\subsection{A remark about the condition numbers}\label{conditioning}
Proceeding as in~\cite{ern_guermond_cond} it would be possible to prove that the $\ell^2$-condition number of the stiffness matrix arising from the discretizations~\eqref{eq:prob_disc}, for both $m\in\{0,1\}$,	scales as $h^{-2}$, as Figures~\ref{num_exp:fig6} and~\ref{num_exp:fig4} confirm. The penalty parameter for the weak imposition of the Neumann boundary conditions is the responsible of the deterioration of the conditioning with respect to the standard mixed finite element discretization of the Poisson problem, for which the condition number scales as $h^{-1}$. An even worse situation occurs when formulation~\eqref{eq:prob_disc_bis} is employed. In this case the condition number scales as $h^{-\left(s+2 \right)}$, $s=\min\{r,k\}$, $r$ being the Sobolev regularity of the exact solution for the pressure field and $k$ the polynomial degree of the Raviart-Thomas discretization, as confirmed by Figures~\ref{num_exp:fig7} and~\ref{num_exp:fig5}.

In all numerical experiments, we do not detect any particular sensitivity of the convergence of the error of the velocities with respect to $\gamma$ in the case of method~\eqref{eq:prob_disc}. On the other hand, it is a different matter altogether as far as the formulation~\eqref{eq:prob_disc_bis} is concerned: this time we can realize the influence of $\gamma$ on the approximation power of the method.

\begin{figure}[!ht]
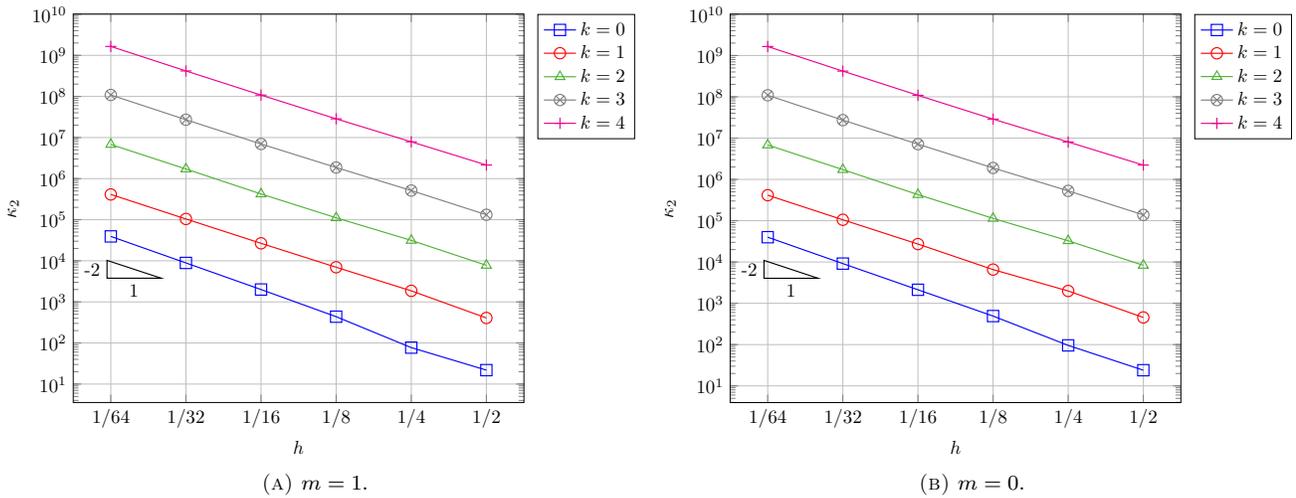

	\centering
	\subfloat[][$m=1$.]
	{
		\includestandalone[width=0.5\textwidth,keepaspectratio=true]{Figures/cond_unit_square}
	}
	\subfloat[][$m=0$.]
	{
		\includestandalone[width=0.5\textwidth,keepaspectratio=true]{Figures/cond_unit_square_m=0}
	}
	\caption{Condition numbers in the ``unit square'' using~\eqref{eq:prob_disc} -  GeoPDEs.}\label{num_exp:fig6}
\end{figure}
\begin{figure}[!ht]
	\centering
	\includestandalone[width=0.5\textwidth,keepaspectratio=true]{Figures/cond_unit_square_bis}
	\caption{Condition numbers in the ``unit square'' using~\eqref{eq:prob_disc_bis} -  GeoPDEs.}\label{num_exp:fig7}
\end{figure}

\begin{figure}[!ht]
	\centering
	\subfloat[][$m=1$.]
	{
		\includestandalone[width=0.5\textwidth,keepaspectratio=true]{Figures/cond_quarter_annulus}
	}
	\subfloat[][$m=0$.]
	{
		\includestandalone[width=0.5\textwidth,keepaspectratio=true]{Figures/cond_quarter_annulus_m=0}
	}
	\caption{Condition numbers in the ``quarter of annulus'' using~\eqref{eq:prob_disc} -  GeoPDEs.}\label{num_exp:fig4}
\end{figure}
\begin{figure}[!ht]
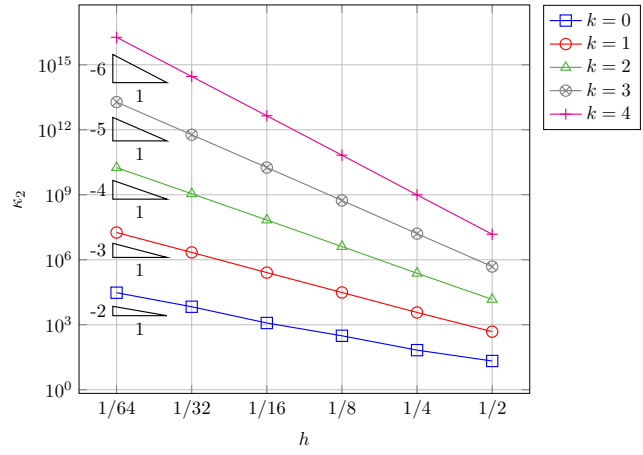

	\centering
\includestandalone[width=0.5\textwidth,keepaspectratio=true]{Figures/cond_quarter_annulus_bis}
	\caption{Condition numbers in the ``quarter of annulus'' using~\eqref{eq:prob_disc_bis} -  GeoPDEs.}\label{num_exp:fig5}
\end{figure}

\FloatBarrier
%---------------------------------------------------------------------------------------------------------
\subsection{The optimality of the penalty parameter}
We want to analyze the optimality of the penalty parameter, denoted through this subsection as $\gamma$, for both numerical schemes. We consider the Raviart-Thomas element of order $k=1$ and compare the numerical results for the $L^2$-error of the velocity field with respect to different powers of the mesh-size as penalty parameter. The first set of numerical experiences is performed using triangular meshes, then we move to quadrilaterals.

To obtain Figures~\ref{num_exp:fig10} and~\ref{num_exp:fig10_bis} the same setting of subsection~\ref{unit_square_triangles} is employed. Then, in Figures~\ref{num_exp:fig11} and~\ref{num_exp:fig11_bis}, we move to the configuration of subsection \ref{unit_circle}. Finally, in Figures~\ref{num_exp:fig8},~\ref{num_exp:fig8_bis} and~\ref{num_exp:fig9},~\ref{num_exp:fig9_bis} we use, respectively, the settings of subsections~\ref{unit_square_quads} and~\ref{quarter_annulus}.

\begin{figure}[!ht]
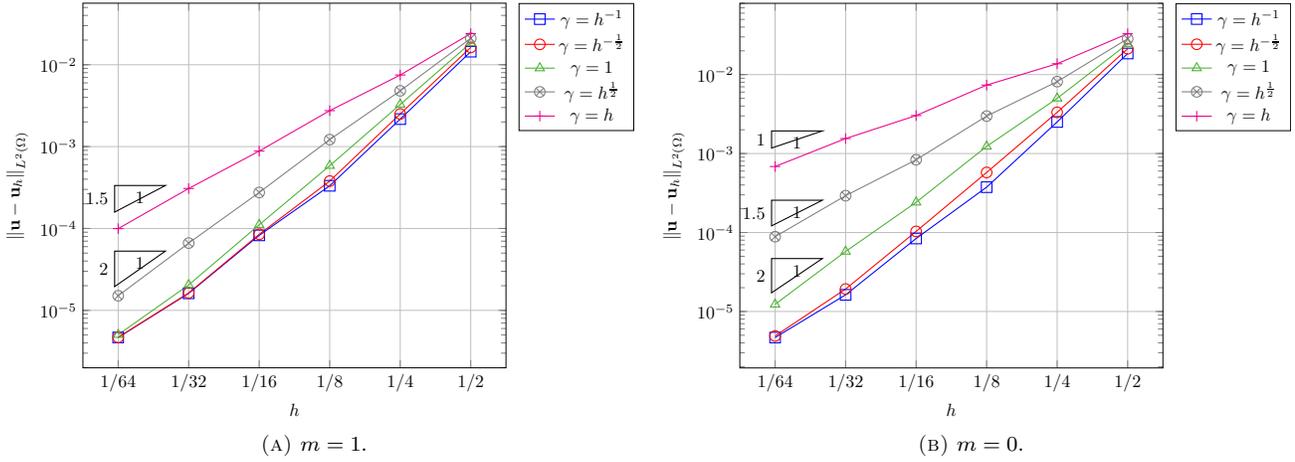

	\centering
	\subfloat[][$m=1$.]
	{
		\includestandalone[width=0.5\textwidth,keepaspectratio=true]{Figures/error_unit_square_compare_penalty_m=1_ff}
	}
	\subfloat[][$m=0$.]
	{
		\includestandalone[width=0.5\textwidth,keepaspectratio=true]{Figures/error_unit_square_compare_penalty_m=0_ff}
	}
	\caption{Compare $L^2$-errors for the velocity in the ``unit square'' using~\eqref{eq:prob_disc} with respect to different values of the penalty parameter $\gamma$ with triangular elements.}\label{num_exp:fig10}
\end{figure}
\begin{figure}[!ht]
	\centering
	\includestandalone[width=0.5\textwidth,keepaspectratio=true]{Figures/error_unit_square_compare_penalty_bis_ff}
	\caption{Compare $L^2$-errors for the velocity in the ``unit square'' using~\eqref{eq:prob_disc_bis} with respect to different values of the penalty parameter $\gamma$ with triangular elements.}\label{num_exp:fig10_bis}
\end{figure}
\FloatBarrier
%------------------------------------------------------------------------

\begin{figure}[!ht]
	\centering
	\subfloat[][$m=1$.]
	{
		\includestandalone[width=0.5\textwidth,keepaspectratio=true]{Figures/error_unit_circle_compare_penalty_m=1_ff}
	}
	\subfloat[][$m=0$.]
	{
		\includestandalone[width=0.5\textwidth,keepaspectratio=true]{Figures/error_unit_circle_compare_penalty_m=0_ff}
	}
	\caption{Compare $L^2$-errors for the velocity in the ``unit circle'' using~\eqref{eq:prob_disc} with respect to different values of the penalty parameter $\gamma$ with triangular elements.}\label{num_exp:fig11}
\end{figure}
\begin{figure}[!ht]
	\centering
	\includestandalone[width=0.5\textwidth,keepaspectratio=true]{Figures/error_unit_circle_compare_penalty_bis_ff}
	\caption{Compare $L^2$-errors for the velocity in the ``unit circle'' using~\eqref{eq:prob_disc_bis} with respect to different values of the penalty parameter $\gamma$ with triangular elements.}\label{num_exp:fig11_bis}
\end{figure}
%-----------------------
\FloatBarrier
% GeoPDEs

\begin{figure}[!ht]
	\centering
	\subfloat[][$m=1$.]
	{
		\includestandalone[width=0.5\textwidth,keepaspectratio=true]{Figures/error_unit_square_compare_penalty_m=1}
	}
	\subfloat[][$m=0$.]
	{
		\includestandalone[width=0.5\textwidth,keepaspectratio=true]{Figures/error_unit_square_compare_penalty_m=0}
	}
	\caption{Compare $L^2$-errors for the velocity in the ``unit square'' using~\eqref{eq:prob_disc} with respect to different values of the penalty parameter $\gamma$ with triangular elements.}\label{num_exp:fig8}
\end{figure}
\begin{figure}[!ht]
	\centering
	\includestandalone[width=0.5\textwidth,keepaspectratio=true]{Figures/error_unit_square_compare_penalty_bis}
	\caption{Compare $L^2$-errors for the velocity in the ``unit square'' using~\eqref{eq:prob_disc_bis} with respect to different values of the penalty parameter $\gamma$ with quadrilateral elements.}\label{num_exp:fig8_bis}
\end{figure}
\FloatBarrier
%------------------------------------------------------------------------
%------------------------------------------------------------------------
\begin{figure}[!ht]
	\centering
	\subfloat[][$m=1$.]
	{
		\includestandalone[width=0.5\textwidth,keepaspectratio=true]{Figures/error_quarter_annulus_compare_penalty_m=1}
	}
	\subfloat[][$m=0$.]
	{
		\includestandalone[width=0.5\textwidth,keepaspectratio=true]{Figures/error_quarter_annulus_compare_penalty_m=0}
	}
	\caption{Compare $L^2$-errors for the velocity in the ``quarter of annulus'' using~\eqref{eq:prob_disc} with respect to different values of the penalty parameter $\gamma$ with quadrilateral elements.}\label{num_exp:fig9}
\end{figure}
\begin{figure}[!ht]
	\centering
	\includestandalone[width=0.5\textwidth,keepaspectratio=true]{Figures/error_quarter_annulus_compare_penalty_bis}
	\caption{Compare $L^2$-errors for the velocity in the ``quarter of annulus'' using~\eqref{eq:prob_disc_bis} with respect to different values of the penalty parameter $\gamma$ with quadrilateral elements.}\label{num_exp:fig9_bis}
\end{figure}

...
%%-----------------------------
%%      your bibliography
%%-----------------------------
\clearpage
%\nocite{*}
\bibliographystyle{plain}
\bibliography{bibliography}
\end{document}